\documentclass[11pt]{amsart}
\usepackage[margin=1in]{geometry}
\usepackage[utf8]{inputenc}
\usepackage{amsthm, amssymb}
\usepackage[colorlinks=true, pdfstartview=FitV, linkcolor=blue, citecolor=blue, urlcolor=blue]{hyperref}
\usepackage[colorinlistoftodos]{todonotes}
\usepackage{algorithm}
\usepackage{algpseudocode}
\usepackage{pgf}
\usepackage{tikz}
\usepackage{tikz-cd}
\usetikzlibrary{positioning,shapes,shadows,arrows}
\usepackage{bbm,bm}
\usepackage{enumitem}
\usepackage{mathabx}
\usepackage{comment}
\usepackage{ytableau}
\usepackage{thmtools}
\usepackage{thm-restate}

\newtheorem{prop}{Proposition}[section]
\newtheorem{thm}[prop]{Theorem}

\newtheorem{lemma}[prop]{Lemma}
\newtheorem{cor}[prop]{Corollary}

\theoremstyle{remark}
\newtheorem{exa}[prop]{Example}
\newtheorem{rem}[prop]{Remark}
\newtheorem{defn}[prop]{Definition}

\newcommand{\NN}{\mathbb{N}}

\newcommand{\BPD}{\mathsf{BPD}}
\newcommand{\LTBPD}{\mathsf{LTBPD}}

\newcommand{\fG}{\mathfrak{G}}
\newcommand{\fGb}{\fG}
\newcommand{\fL}{\mathfrak{L}}
\newcommand{\fLb}{\fL}
\newcommand{\topLas}{\widehat{\fL}}
\newcommand{\topGro}{\widehat{\fG}}
\newcommand{\fS}{\mathfrak{S}}

\newcommand{\PT}{\mathsf{PerfectTab}_\downarrow}

\newcommand{\supp}{\mathsf{supp}}

\newcommand{\wt}{\mathsf{wt}}

\newcommand{\Q}{\mathbb{Q}}

\newcommand{\snow}{\mathsf{snow}}
\newcommand{\rajcode}{\mathsf{rajcode}}
\newcommand{\raj}{\mathsf{raj}}

\newcommand{\snowflake}{\Asterisk}

\newcommand{\invcode}{\mathsf{invcode}}

\newcommand{\std}{\mathsf{std}}

\newcommand{\btile}{
 \begin{tikzpicture}[x=1em,y=1em,thick,color = blue]
\draw[step=1,gray,thin] (0,0) grid (1,1);
\draw[color=black, thick, sharp corners] (0,0) rectangle (1,1);
\end{tikzpicture}}

\newcommand{\htile}{
\begin{tikzpicture}[x=1em,y=1em,thick,color = blue]
\draw[step=1,gray,thin] (0,0) grid (1,1);
\draw[color=black, thick, sharp corners] (0,0) rectangle (1,1);
\draw(1.0,0.5)--(0.0,0.5);
\end{tikzpicture}
}

\newcommand{\vtile}{
\begin{tikzpicture}[x=1em,y=1em,thick,color = blue]
\draw[step=1,gray,thin] (0,0) grid (1,1);
\draw[color=black, thick, sharp corners] (0,0) rectangle (1,1);
\draw(0.5,1.0)--(0.5,0.0);
\end{tikzpicture}}

\newcommand{\ptile}{
\begin{tikzpicture}[x=1em,y=1em,thick,color = blue]
\draw[step=1,gray,thin] (0,0) grid (1,1);
\draw[color=black, thick, sharp corners] (0,0) rectangle (1,1);
\draw(0.5,1.0)--(0.5,0.0);
\draw(1.0,0.5)--(0.0,0.5);
\end{tikzpicture}}

\newcommand{\rtile}{
\begin{tikzpicture}[x=1em,y=1em,thick,color = blue]
\draw[step=1,gray,thin] (0,0) grid (1,1);
\draw[color=black, thick, sharp corners] (0,0) rectangle (1,1);
\draw(1.0,0.5)--(0.5,0.5)--(0.5,0.0);
\end{tikzpicture}}

\newcommand{\jtile}{
\begin{tikzpicture}[x=1em,y=1em,thick,color = blue]
\draw[step=1,gray,thin] (0,0) grid (1,1);
\draw[color=black, thick, sharp corners] (0,0) rectangle (1,1);
\draw(0.5,1.0)--(0.5,0.5)--(0.0,0.5);
\end{tikzpicture}}

\definecolor{darkblue}{rgb}{0.0,0,0.7} % darkblue color
\definecolor{darkred}{rgb}{0.7,0,0} % darkred color
\definecolor{darkgreen}{rgb}{0, .6, 0} % darkgreen color
\newcommand{\definition}[1]{{\color{darkred}\emph{#1}}} % emphasis of a definition

\title{Connection between Schubert polynomials
and top Lascoux polynomials}
\author[T.~Yu]{Tianyi Yu}
\address[T. Yu]{Department of Mathematics, UC San Diego, La Jolla, CA 92093, U.S.A.}
\email{tiy059@ucsd.edu}
\date{February 2023}

\begin{document}

\maketitle
\begin{abstract}

Schubert polynomials form a basis of the polynomial 
$\Q[x_1, x_2, \cdots]$.
This basis and its structure constants have received 
extensive study.
Recently, Pan and Yu initiated the study of top Lascoux polynomials. 
These polynomials form a basis
of the vector space $\widehat{V}$, a subalgebra of $\Q[x_1, x_2, \cdots]$
where each graded piece has finite dimension. 
This paper connects Schubert polynomials and top Lascoux polynomials via a simple operator.
We use this connection to show these two bases share the same 
structure constants.
We also translate several results on Schubert polynomials 
to top Lascoux polynomials, 
including combinatorial formulas
for their monomial expansions and supports.

\end{abstract}

\section{Introduction}

For a permutation $w$,
Lascoux and Sch\"utzenberger~\cite{LS:Schubert}
recursively defined the \definition{Schubert polynomial} 
$\fS_w$ using \definition{divided difference operators}.
These polynomials represent Schubert cycles in flag varieties and have been extensively investigated 
from various perspectives.  
We summarize some significant results on 
Schubert polynomials relevant to this paper.
\begin{enumerate}
\item The set of all Schubert polynomials
forms a basis of the polynomial ring 
$\mathbb{Q}[x_1, x_2, \cdots]$.
Products of Schubert polynomials can be 
expanded positively into Schubert polynomials 
(i.e. the expansion only involves positive 
integer coefficients):
$$
\fS_u \fS_v = \sum_{w} c_{u,v}^w \fS_w,
$$
The coefficient $c^{w}_{u,v}$ is known as the 
\definition{Schubert structure constant}. 
A major open problem in algebraic combinatorics
is to compute $c^{w}_{u,v}$ combinatorially.
\item Lam, Lee and Shimozono~\cite{LLS} introduced 
the \definition{(reduced) bumpless pipedreams 
(BPD)} to compute the monomial expansion of 
Schubert polynomials. 
\item The Schubert polynomial can be 
expanded positively into \definition{key polynomials}~\cite{RS}. 
\item 
The dual character of the flagged Weyl module
of a diagram $D$ is denoted as $\chi_D$.
The Schubert polynomial $\fS_w$ is $\chi_{RD(w)}$
where $RD(w)$ is the Rothe diagram of $w$~\cite{KP}.
\item 
Adve, Robichaux, and Yong~\cite{ARY}
introduced \definition{perfect tableaux}
to compute the support of Schubert polynomials.
\item The Schubert polynomials have 
the \definition{saturated Newton polytope (SNP)} 
property~\cite{FMS}.
\end{enumerate}

The key polynomials mentioned above
are denoted as $\kappa_\alpha$, 
where $\alpha$ is a \definition{weak composition}.
They are the characters of 
Demazure modules~\cite{De}.
Lascoux~\cite{Las} introduced an inhomogeneous analogue
of $\kappa_\alpha$ known as 
the \definition{Lascoux polynomial} $\fLb_\alpha$.
The lowest-degree terms of $\fLb_\alpha$ form
$\kappa_\alpha$.
Recently, Pan and Yu~\cite{PY2} introduced 
the \definition{top Lascoux polynomial} $\topLas_\alpha$
which consists of the 
highest-degree terms of $\fLb_\alpha$.
Let $\widehat{V}$ be the $\Q$-span
of all top Lascoux polynomials. 
Unlike the Schubert polynomials, 
the set of all top Lascoux polynomials
is not linearly independent.
To resolve this, Pan and Yu called a weak composition 
\definition{snowy} if its positive entries are distinct.
Then $\{\topLas_\alpha: \alpha \textrm{ is snowy}\}$
forms a basis of $\widehat{V}$.
By~\cite[Theorem 1.2]{PY2}, 
every top Lascoux polynomial is a top Lascoux indexed by snowy weak composition multiplied by an integer.
In the rest of this paper,  
we only focus on $\topLas_\alpha$
when $\alpha$ is snowy. 

Pan and Yu showed that $\widehat{V}$
is closed under multiplication.
Thus, $\widehat{V}$ can be viewed as a graded algebra where the grading is given by degrees 
of polynomials. 
Each graded piece of $\widehat{V}$
has finite dimension.
Pan and Yu computed the Hilbert series
of $\widehat{V}$.
Intuitively, $\widehat{V}$ is much smaller
than the polynomial ring $\Q[x_1, x_2, \cdots]$ which has no Hilbert series. 

Just like the Schubert polynomials,
$\topLas_\alpha$ can be defined 
recursively using
divided difference operators 
(see~(\ref{EQ: Define top Las})). 
This resemblance leads to a strong connection
between Schubert polynomials
and top Lascoux polynomials,
which is the main focus of this paper.

\begin{defn}
Define the following involution 
on polynomials in $\Q[x_1, \cdots, x_n]$
where each variable has degree at most $m$:
$$r_{m,n}(f) := (x_1\cdots x_n)^m f(x_n^{-1}, \cdots, x_1^{-1}).$$   
\end{defn}

In \S\ref{S: Relations}, we show that each top Lascoux polynomial 
can be realized as $r_{m,n}(\fS_w)$
for some $m, n, w$ and vice versa. 
Following this connection,
we translate the results on $\fS_w$ summarized above to $\topLas_\alpha$. 

\begin{enumerate}
\item 
Products of top Lascoux polynomials can be 
expanded positively into top Lascoux polynomials:
$$
\topLas_\alpha \topLas_\gamma
= \sum_{\sigma} d^\sigma_{\alpha, \gamma} \topLas_\sigma. 
$$
We call the coefficient
\definition{top Lascoux structure constants}.
Every $d^\sigma_{\alpha, \gamma}$ is $c^{w}_{u,v}$
for some permutations $u, v, w$
and vice versa 
(see \S\ref{S: structure constants}).
\item We give a monomial expansion of top Lascoux 
polynomials using (modified) bumpless-pipedreams
(see \S\ref{S: BPD}).
\item The top Lascoux polynomials can be 
expanded positively into key polynomials
(see \S\ref{S: key expansion}).
\item The top Lascoux polynomial $\topLas_\alpha = \chi_{\snow(D(\alpha))}$
where $\snow(D(\alpha))$ is some diagram defined in~\cite{PY2}(See \S\ref{S: character}).
\item The support of top Lascoux polynomials 
can be computed using perfect tableaux 
(see \S\ref{S: character}). 
\item The top Lascoux polynomials have the SNP property (See \S\ref{S: character}).
\end{enumerate}

These properties of the top Lascoux basis precisely mirror those of
the Schubert basis. 
It may be interesting to do Schubert calculus 
in the graded ring $\widehat{V}$
which as a defined and understood Hilbert series. 
By (1), computing the Schubert structure constants 
is the same as computing the top Lascoux structure constants.

Another potential application of our results
is to understand the Grothendieck polynomial $\fGb_w$, 
the $K$-theoretic analogue of $\fS_w$.
By Shimozono and Yu~\cite{SY},
$\fGb_w$ positively
into $\fLb_\alpha$.
Consequently, 
the top degree component of $\fGb_w$,
denoted as $\topGro_w$,
expands positively into $\topLas_\alpha$.
There has been a recent surge in the study of 
$\topGro_w$~\cite{CY, CY2, DMS, Haf, HMSS, PSW, PY, RRRSW, RRW}.
Together with the expansion of $\topGro_w$ into $\topLas_\alpha$,
one would translate our results on $\topLas_\alpha$ to $\topGro_w$.
In particular, 
when $w$ is vexillary ($2143$-avoiding),
Pechenik and Scrimshaw~\cite{PS} showed
that $\fGb_w$ is just a Lascoux polynomial.
We may then translate our results
to $\topGro_w$ for vexillary $w$.
See Remark~\ref{R: top Gro char} for one such potential application. 

The rest of the paper is structured as follows. 
In Section \S\ref{S: Background}, 
we provide an overview of the 
necessary background information.
In \S\ref{S: Relations},
we use $r_{m,n}$ to relate the Schubert polynomials
and top Lascoux polynomials. 
The subsequent sections explore 
various applications of this relationship. 
In Section \S\ref{S: structure constants}, 
we examine the connection between the structure 
coefficients of top Lascoux polynomials and 
Schubert polynomials. 
In \S\ref{S: BPD}, 
we derive a combinatorial formula 
for top Lascoux polynomials 
from the BPD formula of Schubert polynomials. 
In \S\ref{S: key expansion}, 
we translate the key expansion of Schubert polynomials 
to obtain a key expansion of top Lascoux polynomials. 
In \S\ref{S: character},
we show the top Lascoux polynomials 
are certain dual characters of the flagged Weyl modules and characterize the support of top Lascoux polynomials.

\section{Background}
\label{S: Background}

\subsection{Schubert polynomials}

Let $S_+$ be the group of permutations of 
$\{1, 2, \cdots\}$
where only finitely many elements are permuted.
The simple transpositions $s_1, s_2, \dots$ where $s_i = (i, i+1)$ generate $S_+$.
For any positive number $n$, $S_n$ is a subgroup of $S_+$ consisting of $w$ that only permutes $[n] = \{1, 2, \cdots, n\}$.
We represent $w \in S_+$ by its \definition{one-line notation} $[w(1), \dots, w(n)]$
for some $n$ large enough such that 
$w \in S_n$.

A \definition{weak composition} $\alpha = (\alpha_1, \alpha_2, \dots, )$ is an infinite sequence
of non-negative numbers with finitely many
positive entries. 
The \definition{support} of $\alpha$ 
is the set 
$\supp(\alpha) := \{i: \alpha_i > 0\}$.
We represent $\alpha$ as $(\alpha_1, \alpha_2, \dots, \alpha_n)$ 
where $\supp(\alpha) \subseteq [n]$. 
Let $x^\alpha := 
x_1^{\alpha_1}x_2^{\alpha_2}\cdots$
and $|\alpha| := \sum_{i\geqslant 1}\alpha_i$.

We say $(i,j)$ is an inversion of $w \in S_+$
if $i < j$ and $w(i) > w(j)$.
The \definition{inversion code} of $w$, 
denoted as $\invcode(w)$ is 
a weak composition defined as
$$\invcode(w)_i := |\{j: (i,j) \textrm{ is an inversion of }w\}|.$$

The \definition{Schubert polynomials} $\fS_w$ are indexed by permutations
from $S_+$.
When a weak composition is weakly decreasing, 
we say it is a \definition{partition}.
When $\invcode(w)$ is a partition, 
we say $w$ is a \definition{dominant permutation}.
Define the 
\definition{Newton divided difference operator}:
$$\partial_i(f) := \dfrac{f - s_i f}{x_i-x_{i+1}},$$
where $s_i f$ is the operator that swaps $x_i$
and $x_{i+1}$.
Now we can define the \definition{Schubert polynomial}
of $w \in S_+$ recursively~\cite{LS:Schubert}.

\begin{align*}
\fS_w = \begin{cases}
x^{\invcode(w)} & \text{if $w$ is dominant} \\
\partial_i (\fS_{w s_i} ) &\text{if $w(i) < w(i+1)$.}
\end{cases}
\end{align*}

The set of Schubert polynomials 
form a $\mathbb{Q}$-basis of the polynomial ring $\mathbb{Q}[x_1, x_2, \cdots]$.
For $u, v \in S_+$,
the product $\fS_u \fS_v$ can be expanded into Schubert polynomials.
Let $c^{w}_{u,v}$ be the coefficient of $\fS_w$ in this expansion.
By geometric results, 
$c^w_{u,v}$ is a non-negative integer known as the 
\definition{Schubert structure constants}.

\subsection{Key polynomials and top Lascoux polynomials}
The key polynomials $\kappa_\alpha$ 
are indexed by weak compositions. 
Lascoux and Sch{\"u}tzenberger~\cite{LS:Groth} define the key polynomials recursively.
using the operator
$\pi_i(f) := \partial_i(x_i f)$:
\begin{align*}
\kappa_\alpha := \begin{cases}
x^\alpha & \text{if $\alpha$ is a partition,} \\
\pi_i (\kappa_{s_i\alpha} ) &\text{if $\alpha_i<\alpha_{i+1}$,}
\end{cases}
\end{align*}
where $s_i$ swaps the $i^\textsuperscript{th}$ and $(i+1)^\textsuperscript{th}$ entries of $\alpha$. 

The \definition{top Lascoux polynomial} $\topLas_\alpha$ 
are homogeneous polynomials 
indexed by \definition{snowy} weak compositions: 
weak compositions whose positive entries are distinct.
Following~\cite[Lemma 4.23]{PY2}, 
we may define these polynomials recursively.
Define the operator $\widehat{\pi}_i$ 
as $$\widehat{\pi}_i(f) := \pi_i(x_{i+1}f)= x_i x_{i+1} \partial_i(f).$$
Then define
\begin{equation}
\label{EQ: Define top Las}
\topLas_\alpha := \begin{cases}
x^\alpha & \text{if $\alpha$ is a partition,} \\
\widehat{\pi}_i (\topLas_{s_i\alpha}) &\text{if $\alpha_i<\alpha_{i+1}$.}
\end{cases}
\end{equation}

\begin{comment}
In a snowy weak composition $\alpha$,
we say $(i, j)$ is an \definition{coinversion}
if $\alpha_i < \alpha_j$ and $i < j$.
The \definition{rajcode} of a snowy $\alpha$,
denoted as $\rajcode(\alpha)$,
is a weak composition
whose $i$\textsuperscript{th} entry
is $\alpha_i$ plus the number of coinversions
of the form $(i, j)$.
For instance,
$\rajcode((3,0,1,4,6,0,2)) = (5,4,3,5,6,1,2)$.
By Pan and Yu~\cite{PY2}, $\topLas$ has $x^\rajcode(\alpha)$ as the leading
monomial with respect to the tail lexicographical order.
Define the statistic $\raj(\alpha)$ as $|\rajcode(\alpha)|$.
Thus, $\topLas_\alpha$ has degree $\raj(\alpha)$.
\end{comment}

By the study of Pan and Yu~\cite{PY2}, 
the vector space
\begin{equation}
\label{EQ: widehat V}
\widehat{V} := \mathbb{Q}\textrm{-}\textsf{span} \{ \topLas_\alpha: \alpha \textrm{ is a snowy weak composition}\}
\end{equation}
is an sub-algebra of $\Q[x_1, x_2, \dots]$.
Its basis is
given by the spanning set in 
(\ref{EQ: widehat V})
and its Hilbert series is
$\prod_{m > 0} \left( 1 + \frac{q^m}{1 - q} \right)$.
In particular, each graded piece of $\widehat{V}$ has finite dimension. 

For weak compositions $\alpha, \gamma,$ and $\delta$,
let $d^\delta_{\alpha, \gamma}$ be the coefficient of 
$\topLas_\delta$ in the expansion of 
$\topLas_\alpha \times \topLas_\gamma$.
We call them the 
\definition{top Lascoux structure constants}.
Later in \S\ref{S: structure constants},
we show
each $d^\delta_{\alpha, \gamma}$
is the Schubert structure constant $c^w_{u,v}$
for some permutations $u, v, w$
and vice versa. 

\subsection{Bumpless pipedreams}
The \definition{(reduced) bumpless pipedreams (BPD)}, introduced by Lam, Lee and Shimozono\cite{LLS},
are combinatorial
objects that give a monomial expansion of a Schubert polynomial. 
For permutation $w \in S_n$,
a BPD is an $n \times n$ grid built by the following six tiles: 
$$
\btile, \quad \rtile, \quad \jtile, \quad \ptile, \quad \htile, \quad \vtile.
$$
We adopt the convention that row $1$ is the topmost row
and column $1$ is the left most column. 
For each $i \in [n]$, 
we require a pipe to enter from the bottom of column $i$
and end at the rightmost edge of row $w(i)$.
Moreover, two pipes cannot cross more than once. 
\begin{exa}
\label{E: BPD}
There are three BPDs for the permutation in $S_4$
with one-line notation $[2,1,4,3]$.
\[
\begin{tikzpicture}[x=1.5em,y=1.5em,thick,color = blue]
\draw[step=1,gray,thin] (0,0) grid (4,4);
\draw[color=black, thick, sharp corners] (0,0) rectangle (4,4);
\draw(.5, 0)--(.5,2.5)--(4,2.5);
\draw(1.5, 0)--(1.5,3.5)--(4,3.5);
\draw(2.5, 0)--(2.5,0.5)--(4,0.5);
\draw(3.5, 0)--(3.5,1.5)--(4,1.5);
\end{tikzpicture}
\quad\quad\quad\quad\quad
\begin{tikzpicture}[x=1.5em,y=1.5em,thick,color = blue]
\draw[step=1,gray,thin] (0,0) grid (4,4);
\draw[color=black, thick, sharp corners] (0,0) rectangle (4,4);
\draw(.5, 0)--(.5,1.5)--(2.5,1.5)--(2.5,2.5)--(4,2.5);
\draw(1.5, 0)--(1.5,3.5)--(4,3.5);
\draw(2.5, 0)--(2.5,0.5)--(4,0.5);
\draw(3.5, 0)--(3.5,1.5)--(4,1.5);
\end{tikzpicture}
\quad\quad\quad\quad\quad
\begin{tikzpicture}[x=1.5em,y=1.5em,thick,color = blue]
\draw[step=1,gray,thin] (0,0) grid (4,4);
\draw[color=black, thick, sharp corners] (0,0) rectangle (4,4);
\draw(.5, 0)--(.5,2.5)--(4,2.5);
\draw(1.5, 0)--(1.5,1.5)--(2.5,1.5)--(2.5,3.5)--(4,3.5);
\draw(2.5, 0)--(2.5,0.5)--(4,0.5);
\draw(3.5, 0)--(3.5,1.5)--(4,1.5);
\end{tikzpicture}
\]
\end{exa}

We let $\BPD(w)$ be the set of BPDs of a permutation $w$.
We call $\btile$ a \definition{blank}.  
The \definition{blank-weight} of a BPD $D$
is a weak composition where the $i^\textsuperscript{th}$
entry counts the number of $\btile$ in row $i$.
We denote it as $\wt_{\Box}(D)$ 
to emphasize that the weight comes from the blanks.
Then BPD gives a combinatorial formula for Schubert polynomials.
\begin{thm}[\cite{LLS}]
For a permutation $w \in S_n$,
$$
\fS_w = \sum_{D \in \BPD(w)} x^{\wt_\square(D)}.
$$
\end{thm}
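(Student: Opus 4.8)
\emph{Proof sketch.} The plan is to check that the right-hand side obeys the recursion that defines $\fS_w$. Write $P_w \df \sum_{D \in \BPD(w)} x^{\wt_\square(D)}$; it is enough to prove \textbf{(a)} if $w$ is dominant then $P_w = x^{\invcode(w)}$, and \textbf{(b)} if $w(i) < w(i+1)$ then $P_w = \partial_i(P_{ws_i})$. Since $\ell(ws_i) = \ell(w)+1$ under the hypothesis of (b) and the longest element of each $S_n$ is dominant, the same induction that makes the recursion for $\fS_w$ well posed then forces $P_w = \fS_w$ for every $w$.

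For (a), I would show a dominant permutation has a \emph{unique} bumpless pipedream: route every pipe as far up and to the left as it can go, leaving the fewest possible blanks, and check that the resulting vector of blank counts is exactly $\invcode(w)$. Uniqueness is a rigidity statement --- when $w$ is dominant no crossing between two pipes is avoidable, so no droop or undroop move is available and no other BPD exists --- whence $P_w = x^{\invcode(w)}$.

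The heart of the matter is (b). The operator $\partial_i$ only symmetrizes $x_i$ and $x_{i+1}$ and fixes every other variable, while passing from $w$ to $ws_i$ merely interchanges the exit rows of the two pipes attached to the indices $i$ and $i+1$; so I would compare $\BPD(w)$ with $\BPD(ws_i)$ locally, inside the window of the grid attached to $i$ and $i+1$, leaving the contribution of all other indices untouched. The tool is the droop move: from a BPD of $ws_i$ one produces, by droops confined to that window, an entire family of BPDs of $w$, and across this family the pair $(\wt_\square(\cdot)_i, \wt_\square(\cdot)_{i+1})$ runs over the interval $(p,q),(p-1,q+1),\dots,(q,p)$ --- precisely the exponents $\partial_i$ generates from the monomial $x_i^p x_{i+1}^q$ carried by the source BPD when $p>q$. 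The BPDs of $ws_i$ whose $i$-th blank count does not exceed the $(i+1)$-st are matched in sign-reversing pairs by an involution acting in the same window, mirroring $\partial_i(x_i^p x_{i+1}^q + x_i^q x_{i+1}^p)=0$. Assembling these local contributions over the remaining indices gives $P_w = \partial_i(P_{ws_i})$. The main obstacle is making the droop family and the cancelling involution precise: one must verify that drooping preserves the ``two pipes cross at most once'' condition and that the families rooted at the BPDs of $ws_i$ partition $\BPD(w)$ exactly, with the advertised weights. This is the technical core of the argument in \cite{LLS}.

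Two routes bypass the recursion: a weight-preserving bijection between $\BPD(w)$ and the reduced pipedreams of $w$, whose monomial generating function is the classical Billey--Jockusch--Stanley / Fomin--Kirillov formula for $\fS_w$; or a geometric argument, since the components of a suitable Gr\"obner degeneration of the matrix Schubert variety of $w$ are indexed by $\BPD(w)$ and record the correct multidegrees. Each has its own nontrivial input, so for a self-contained proof I would follow the recursive plan above.
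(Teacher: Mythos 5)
There is nothing to compare against in the paper itself: this statement is quoted as background and attributed to \cite{LLS}; the paper never proves it. So your proposal has to stand on its own, and as written it does not. Your overall strategy (verify the defining recursion: a dominant permutation has a unique BPD with blank-weight $\invcode(w)$, then show $P_w=\partial_i(P_{ws_i})$ when $w(i)<w(i+1)$) is legitimate in principle --- a droop-based inductive argument of exactly this type exists in the literature, and the paper's Remark in \S\ref{S: BPD} even points to \cite[Proposition 2.1]{Huang} as the template it would adapt for left-to-top BPDs. The base case you describe is also correct.

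The gap is that step (b), which you yourself call the heart of the matter, is only asserted: you never construct the droop families, never prove they preserve the ``two pipes cross at most once'' condition, never show they partition $\BPD(w)$ with the advertised weights, and never define the cancelling involution --- you defer all of this to \cite{LLS}, which is precisely the content of the theorem being proved. Moreover, two concrete details in your description are off. First, the claimed run of weight pairs $(p,q),(p-1,q+1),\dots,(q,p)$ has total degree $p+q$, whereas $\partial_i(x_i^px_{i+1}^q)$ has degree $p+q-1$; the family rooted at a source of row-weights $(p,q)$ would have to realize $(p-1,q),(p-2,q+1),\dots,(q,p-1)$. Second, your ``window attached to the indices $i$ and $i+1$'' conflates positions with rows: passing from $w$ to $ws_i$ swaps the pipes entering at the bottoms of \emph{columns} $i$ and $i+1$, while $\partial_i$ and the blank-weight $\wt_\square$ see \emph{rows} $i$ and $i+1$ of the grid, and these are not the same region of the BPD; the local moves and the weight changes therefore do not obviously live in one common window, and this mismatch must be resolved (as it is in the cited proofs) before the locality argument can be run. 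The alternative routes you mention (a weight-preserving bijection to reduced pipe dreams, or Gr\"obner degeneration of matrix Schubert varieties) are real, but each is a substantial theorem on its own, so invoking them does not close the gap either.
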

For instance, by Example~\ref{E: BPD},
when $w$ has one-line notation $[2,1,4,3]$,
$\fS_{w} = x_1x_3 + x_1 x_2 + x_1^2$.

\subsection{Diagrams}
A \definition{diagram} is a finite subset 
of $\NN \times \NN$.
We may represent a diagram by putting a cell at row $r$
and column $c$ for each $(r,c)$ in the diagram.
The \definition{weight} of a diagram $D$, 
denoted as $\wt(D)$, 
is a weak composition 
whose $i$\textsuperscript{th}  entry is the number of boxes 
in its $i$\textsuperscript{th} row.
Each weak composition $\alpha$
is associated with a diagram $D(\alpha)$,
the unique left-justified diagram 
with weight $\alpha$.
Each permutation $w \in S_n$ or $S_+$
is associated with a diagram called 
the \definition{Rothe diagram} 
$$RD(w) := \{(r,c): w(r) > c, w(i) \neq c \textrm{ for any } i \in [r]\}.$$

\begin{exa}
We provide examples of two diagrams. 
For clarity, we put an ``$i$'' 
on the left of the $i$\textsuperscript{th} row
and put a small dot in each cell.
\begin{align*}
D((0,2,4,0,1)) = 
\raisebox{1cm}{
\begin{ytableau}
\none[1]\cr
\none[2] & \cdot & \cdot \cr
\none[3] & \cdot & \cdot & \cdot & \cdot \cr
\none[4] \cr
\none[3] & \cdot
\end{ytableau}}
\quad,\quad\quad\quad
RD([4,1,5,3,2]) = 
\raisebox{1cm}{
\begin{ytableau}
\none[1] & \cdot & \cdot & \cdot \cr
\none[2] & \none & \none & \none \cr
\none[3] & \none & \cdot & \cdot \cr
\none[4] & \none & \cdot & \none \cr
\none[5] & \none & \none & \none 
\end{ytableau}}\quad.
\end{align*}
\end{exa}

The Rothe diagram can characterize one special term in a Schubert polynomial.
We consider the \definition{tail-lexicographical order} on weak compositions: 
For two weak compositions $\alpha, \gamma$,
we say $\alpha$ is larger than $\gamma$ if there exists $i$
such that $\alpha_j = \gamma_j$ for all $j > i$
and $\alpha_i > \gamma_i$.
For a polynomial $f$,
the \definition{support} of $f$, denoted as $\supp(f)$,
is the set of weak composition $\alpha$
such that $x^\alpha$ has non-zero coefficient in $f$.
The \definition{leading monomial} of $f$ is $x^\alpha$
such that $\alpha$ is the largest in $\supp(\alpha)$.
By Lascoux and Sch{\"u}tzenberger~\cite{LS:Schubert},
the leading monomial of $\fS_w$ is $x^{\wt(RD(w))}$
with coefficient $1$.

To describe the leading monomial of a top Lascoux polynomial,
Pan and Yu~\cite{PY2} introduce the \definition{snow diagram}.
For each diagram $D$, its snow diagram $\snow(D)$
is a diagram together with some labels in its cells. 
Each cell can be unlabeled, or labeled by $\bullet$ or $\snowflake$.
We only consider the snow diagram of $D(\alpha)$
where $\alpha$ is a snowy weak composition. 
In this case, $\snow(D(\alpha))$ can be defined as follows. 
In $D(\alpha)$, 
label the rightmost cell on each row with $\bullet$.
Then put a cell labeled by $\snowflake$ in empty spaces
above each $\bullet$.
\begin{exa}
Let $\alpha = (2,0,4,0,1)$.
Then $D(\alpha)$ and $\snow(D(\alpha))$
are depicted as follows.

\begin{align*}
D(\alpha) = 
\raisebox{0.95cm}{
\begin{ytableau}
\none[1] & \cdot& \cdot \cr
\none[2] \cr
\none[3] & \cdot& \cdot& \cdot& \cdot \cr
\none[4] \cr
\none[5] & \cdot\cr
\end{ytableau}}\quad,
\quad \quad
\snow(D(\alpha)) = 
\raisebox{0.95cm}{
\begin{ytableau}
\none[1] & \cdot & \bullet & \none & \snowflake \cr
\none[2] & \snowflake & \none & \none & \snowflake\cr
\none[3] & \cdot& \cdot& \cdot& \bullet \cr
\none[4] & \snowflake \cr
\none[5] & \bullet \cr
\end{ytableau}}\quad.
\end{align*}
\end{exa}

For a snowy $\alpha$,
define 
$$\rajcode(\alpha) := \wt(\snow(D(\alpha))). \textrm{ Equivalently, }
\rajcode(\alpha)_i := \alpha_i + \{j > i: \alpha_j > \alpha_i \}.$$
By~\cite{PY2}, 
$x^{\rajcode(\alpha)}$ is the leading monomial
of $\topLas_\alpha$.
Moreover, two distinct snowy weak compositions
have different $\rajcode$.
We end this subsection with
a simple property of $\rajcode$
that will be useful in \S\ref{S: structure constants}.
\begin{lemma}
\label{L: rajcode tail-lex}
Let $\alpha, \gamma$ be two snowy weak compositions. 
If $\alpha$ is larger than $\gamma$ in tail-lexicographical order,
then $\rajcode(\alpha)$ is also larger than $\rajcode(\gamma)$.
\end{lemma}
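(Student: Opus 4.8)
The plan is to reduce the claim to a statement about the behavior of the two relevant quantities when one reads the compositions from the tail end. Recall that $\alpha$ is larger than $\gamma$ in tail-lexicographical order means there exists an index $i$ with $\alpha_j = \gamma_j$ for all $j > i$ and $\alpha_i > \gamma_i$. I want to show $\rajcode(\alpha)$ is larger than $\rajcode(\gamma)$, i.e. that there is an index $k$ with $\rajcode(\alpha)_j = \rajcode(\gamma)_j$ for all $j > k$ and $\rajcode(\alpha)_k > \rajcode(\gamma)_k$. The natural candidate for $k$ is the same witnessing index $i$. First I would fix $i$ as above and examine, for any $j \geq i$, the formula $\rajcode(\beta)_j = \beta_j + \#\{\ell > j : \beta_\ell > \beta_j\}$. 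Since $\alpha$ and $\gamma$ agree on all entries strictly past $i$, for an index $j > i$ the set $\{\ell > j\}$ lies entirely in the region of agreement, so $\rajcode(\alpha)_j = \rajcode(\gamma)_j$; this handles the tail.

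The crux is the index $i$ itself: I must show $\rajcode(\alpha)_i > \rajcode(\gamma)_i$, where
$$
\rajcode(\alpha)_i = \alpha_i + \#\{\ell > i : \alpha_\ell > \alpha_i\}, \qquad
\rajcode(\gamma)_i = \gamma_i + \#\{\ell > i : \gamma_\ell > \gamma_i\}.
$$
Write $T = \{\alpha_{i+1}, \alpha_{i+2}, \dots\} = \{\gamma_{i+1}, \gamma_{i+2}, \dots\}$ for the common (multi)set of tail entries; the counting sets above are really $\#\{\ell > i : t > \alpha_i\}$ for $t$ ranging over $T$, and similarly for $\gamma$. So the quantity $\rajcode(\beta)_i$, as a function of the scalar $\beta_i$ with the tail $T$ fixed, equals $\beta_i + \#\{t \in T : t > \beta_i\}$ (counted with multiplicity). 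The key observation is that this function is \emph{strictly increasing} in $\beta_i$ as long as we only evaluate it at values that make the whole composition snowy — because snowiness forces the positive entries to be distinct, increasing $\beta_i$ by moving to the next admissible value drops at most as many elements out of the counting set as the increment gained, and in fact a short case analysis (does the new value of $\beta_i$ coincide with some $t \in T$? it cannot, by snowiness) shows the net change is strictly positive. Applying this with $\alpha_i > \gamma_i$ gives $\rajcode(\alpha)_i > \rajcode(\gamma)_i$, completing the proof.

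I expect the main obstacle to be the monotonicity step in the previous paragraph: one has to be careful that "strictly increasing" genuinely uses the snowy hypothesis. If $\beta_i$ were allowed to equal some tail value, bumping $\beta_i$ up by one past a block of equal entries could decrease the count by more than one and the claim would fail; the snowy condition (distinct positive entries, and the observation that the value $0$ behaves fine since increasing off $0$ only adds to the count) is exactly what rules this out. A clean way to package this is: for a snowy composition $\beta$, $\rajcode(\beta)_i$ equals $1$ plus the rank of $\beta_i$ among the multiset $\{\beta_i\} \cup \{t \in T : t \geq \beta_i\}$ ordered increasingly — or, even more simply, observe that $\rajcode(\beta)_i = \#\{j \leq i\} \cdot 0 + \dots$; rather than chase formulas I would instead argue via the snow diagram directly: increasing $\alpha_i$ from $\gamma_i$ to $\alpha_i$ while keeping the tail fixed strictly enlarges row $i$ of $\snow(D(\cdot))$, since every cell (including each $\bullet$ and each $\snowflake$) present in row $i$ of $\snow(D(\gamma))$ persists and at least one new cell appears. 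That geometric phrasing makes the strictness transparent and avoids the fragile arithmetic.
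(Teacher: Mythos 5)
Your proof is correct and takes essentially the same route as the paper: fix the witnessing index $i$, observe that $\rajcode(\alpha)_j = \rajcode(\gamma)_j$ for all $j > i$ since the tails agree, and establish the strict inequality $\rajcode(\alpha)_i > \rajcode(\gamma)_i$. The paper simply asserts that last inequality as clear, while you spell out the snowiness-based justification (the tail entries strictly between $\gamma_i$ and $\alpha_i$ are distinct and none can equal $\alpha_i$, equivalently the new cell $(i,\alpha_i)$ of $\snow(D(\alpha))$ is not already a $\snowflake$ cell of $\snow(D(\gamma))$), which is a correct elaboration of the same argument.
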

\begin{proof}
Find $i$ such that $\alpha_j = \gamma_j$ for all $j > i$
and $\alpha_i > \gamma_i$.
Clearly, $\rajcode(\alpha)_j = \rajcode(\gamma)_j$ 
for all $j > i$ and $\rajcode(\alpha)_i > \rajcode(\gamma)_i$.
\end{proof}

\subsection{Dual character of the flagged Weyl module}
Let $B$ be the group of $n \times n$ upper triangular matrices
over $\mathbb{C}$.
Each diagram $D$ that lies in $[n] \times [n]$ 
is associated with a representation of $B$
known as the \definition{flagged Weyl module}.
See~\cite{Mac, RS_Percent} for a detailed construction of this module.

Let $D$ be a diagram.
We use $\chi_D$ to denote the dual character of the 
flagged Weyl module associated with $D$.
This is a family of polynomials 
in $\mathbb{Q}[x_1, x_2, \cdots]$
containing the Schubert polynomials and key polynomials. 

\begin{thm}[{\cite{Dem, KP, Mag}}]
\label{T: Schuberts are chars}
For $w \in S_n$,
$\chi_{RD(w)} = \fS_w$.
For a weak composition $\alpha$,
$\chi_{D(\alpha)} = \kappa_\alpha$.
\end{thm}

The polynomials $\chi_D$ can be computed recursively
for certain diagrams $D$.
To characterize them, we define the following moves
on diagrams. 
\begin{defn}[\cite{Mag}]
Let $D$ be a diagram.
We say $D'$ is obtained from $D$ via 
an \definition{orthodontic move}
if one of the following holds. 
\begin{itemize}
\item Suppose there is $r$
such that $(1,r') \in D$ if and only if $r' \in [r]$.
Then one may remove all cells in column $1$
of $D$ and shift all remaining cells
leftward by $1$ to obtain $D'$.
\item Assume there exists $r$ such that $(r, c) \in D$ implies $(r+1,c) \in D$
for any $c \in \mathbb{Z}_{>0}$.
Then one may swap row $r$ and row $r+1$ 
to obtain $D'$.
\end{itemize}
\end{defn}

The effect of each orthodontic move on $\chi_D$
can be characterized as follows. 
\begin{thm}[{\cite{Mag}}]
\label{T: ort}
Let $D$ be a diagram and $D'$ is obtained from $D$ via an orthodontic move.
\begin{itemize}
\item If $D'$ is obtained from $D$ by removing cells $(1, 1), \cdots, (r,c)$ and shifting all cells left,
then $\chi_{D} = \chi_{D'} x_1 \cdots x_r$.
\item If $D'$ is obtained from $D$ by swapping row $r$ and $r+1$,
then $\chi_{D} = \pi_i(\chi_{D'})$.
\end{itemize}
\end{thm}

Together with the base case $\chi_{\emptyset} = 1$,
one may compute $\chi_D$ using Theorem~\ref{T: ort}
if one can obtain the empty diagram from $D$
using the orthodontic moves. 
Such diagrams can be described as follows. 

\begin{defn}[\cite{RS_Percent}]
A diagram $D$ is \definition{percent-avoiding} if 
$(i_1,j_1), (i_2, j_2) \in D$ with $i_1 < i_2$ and $j_2 < j_1$
implies either $(i_1, j_2) \in D$ or $(i_2, j_1) \in D$. 
\end{defn}

\begin{thm}[\cite{RS_Percent}]
\label{T: PA and OM}
Let $D$ be a diagram. 
One may apply orthodontic moves on $D$ repetitively and
eventually obtain the empty diagram if and only if $D$
is percent-avoiding.
In particular, 
Rothe diagrams are such diagrams. 
\end{thm}

We also need a well-known property of $\chi_D$
that follows immediately from its construction.
\begin{thm}
\label{T: Permuting columns}
If $D$ and $D'$ differ by permuting the columns, 
then $\chi_D = \chi_{D'}$.
\end{thm}

\section{Relations between top Lascoux polynomials 
and Schubert polynomials}
\label{S: Relations}

This section describes the relationship between
top Lascoux and Schubert polynomials.

\subsection{The reverse complement involution on polynomials}

In this subsection, we describe 
a linear operator on polynomials. 
In the next subsection, we use this operator to transform
a top Lascoux polynomial into a Schubert polynomial.
We begin with an involution on certain weak compositions. 

\begin{defn}
Let $m, n$ be positive integers.
Define the \definition{reverse complement} operator $r_{m,n}$
on the set of weak compositions
$\alpha$ such that $\supp(\alpha) \subseteq [n]$
and $\alpha_i \leq m$ for all $i$.
We define 
\begin{align*}
r_{m,n}(\alpha) := (m - \alpha_n, \dots, m - \alpha_1).
\end{align*}
\end{defn}

Next, we analogously define $r_{m,n}$ on certain polynomials. 
\begin{defn}
Let $m, n$ be positive integers.
We extend $r_{m, n}$
to the set of polynomials in $x_1, \dots, x_n$
where the power of any $x_i$ is at most $m$.
We define it as the linear operator such that 
$r_{m,n}(x^\alpha) := x^{r_{m,n}(\alpha)}$.
Equivalently, we can define $r_{m,n}$ as
\begin{align*}
r_{m,n}(f) & := x_1^m \cdots x_n^m f(x_n^{-1}, \cdots, x_1^{-1}).
\end{align*}
\end{defn}

\begin{rem}
The operator $r_{m,n}$ on polynomials
is similar to the operator 
$$f \mapsto x_1^n \cdots x_n^n f(x_1^{-1}, \cdots, x_n^{-1})$$
considered by Huh, Matherne, M{\'e}sz{\'a}ros and St. Dizier~\cite{HMMS}.
In \cite[Theorem 6]{HMMS},
the authors apply this operator on a Schubert polynomial $\fS_w$
with $w \in S_n$ and show the resulting polynomial is Lorentzian
after normalization. 
Our $r_{m,n}$ is also similar to the operator
$$f \mapsto f(x_1^{-1}, \cdots, x_n^{-1})$$
which sends the character 
of a $\textrm{GL}_n$ module to the character of its dual. 
A more generalized operator $f \mapsto x^\alpha f(x_1^{-1}, \cdots, x_n^{-1})$ and its action on Schubert polynomials are studied by
Fan, Guo and Liu~\cite{FGL}.
\end{rem}

Next, we investigate how to swap $r_{m,n}$
with the operators: 
$\partial_i$, $\pi_i$, and $\widehat{\pi}_i$.

\begin{lemma}
\label{L: commute operators}

Suppose $r_{m,n}$ is defined on a polynomial $f$.
Take $i \in [n-1]$.
Clearly, $r_{m,n}$ is also defined on $\partial_i(f), \pi_i(f)$,
and $\widehat{\pi}_i(f)$.
Then we have
\begin{align*}
r_{m,n} (\partial_i(f)) &= \widehat{\pi}_{n-i} (r_{m,n}(f)),\\
r_{m,n} (\pi_i(f)) &= \pi_{n-i} (r_{m,n}(f)),\\
r_{m,n} (\widehat{\pi}_i(f)) &= \partial_{n-i} (r_{m,n}(f)).
\end{align*}
\end{lemma}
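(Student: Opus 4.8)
The plan is to verify the three identities by direct computation, reducing everything to the single most basic fact: how $r_{m,n}$ interacts with $\partial_i$. Since $\pi_i(f) = \partial_i(x_i f)$ and $\widehat{\pi}_i(f) = \partial_i(x_i x_{i+1} f)$, once we understand $r_{m,n}\circ\partial_i$ the other two will follow by tracking the multiplications by $x_i$ and $x_{i+1}$ through the reverse-complement substitution. The key observation is that the substitution underlying $r_{m,n}$, namely $x_j \mapsto x_{n+1-j}^{-1}$ together with multiplication by $x_1^m\cdots x_n^m$, sends the transposition $s_i$ (swapping $x_i, x_{i+1}$) to $s_{n-i}$ (swapping $x_{n-i}, x_{n-i+1} = x_{n-(i-1)}$), since index $i$ maps to $n+1-i$ and index $i+1$ maps to $n-i$; these are adjacent, so the swap is $s_{n-i}$.

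Concretely, I would first record the intertwining of the substitution with $s_i$: if $\iota$ denotes the algebra map $g \mapsto g(x_n^{-1},\dots,x_1^{-1})$ on Laurent polynomials, then $\iota \circ s_i = s_{n-i} \circ \iota$, and $\iota(x_i - x_{i+1}) = x_{n+1-i}^{-1} - x_{n-i}^{-1} = \frac{x_{n-i} - x_{n+1-i}}{x_{n-i}x_{n+1-i}}$. Writing $P = x_1^m\cdots x_n^m$ (which is $s_j$-invariant for every $j \in [n-1]$), we get $r_{m,n}(f) = P\,\iota(f)$. Then compute
\begin{align*}
r_{m,n}(\partial_i f) &= P\,\iota\!\left(\frac{f - s_i f}{x_i - x_{i+1}}\right)
= P\,\frac{\iota(f) - s_{n-i}\iota(f)}{\iota(x_i - x_{i+1})}\\
&= P\,x_{n-i}x_{n+1-i}\,\frac{\iota(f) - s_{n-i}\iota(f)}{x_{n-i} - x_{n+1-i}}
= x_{n-i}x_{n+1-i}\,\partial_{n-i}\!\big(P\,\iota(f)\big)\\
&= \widehat{\pi}_{n-i}\big(r_{m,n}(f)\big),
\end{align*}
where moving $P\, x_{n-i} x_{n+1-i}$ inside $\partial_{n-i}$ is legitimate because this monomial is $s_{n-i}$-invariant. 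This is exactly the first identity, and it also shows that $r_{m,n}$ is well-defined on $\partial_i(f)$ — the Laurent polynomial $\iota(\partial_i f)$, after multiplying by $P$, has nonnegative exponents bounded by $m$ because $\partial_i f$ lies in the same degree range (its variable degrees do not exceed those of $f$).

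For the second identity, I would use $\pi_i(f) = \partial_i(x_i f)$ and $\iota(x_i f) = x_{n+1-i}^{-1}\iota(f)$, then apply the formula just derived (with $x_i f$ in place of $f$): $r_{m,n}(\pi_i f) = \widehat{\pi}_{n-i}(r_{m,n}(x_i f)) = x_{n-i}x_{n+1-i}\,\partial_{n-i}\big(P\,x_{n+1-i}^{-1}\iota(f)\big) = x_{n-i}\,\partial_{n-i}(P\,\iota(f)) = \pi_{n-i}(r_{m,n}(f))$, using $\pi_j(g) = \partial_j(x_j g)$. Similarly, $\widehat{\pi}_i(f) = \partial_i(x_i x_{i+1} f)$ and $\iota(x_i x_{i+1} f) = x_{n+1-i}^{-1}x_{n-i}^{-1}\iota(f)$, giving $r_{m,n}(\widehat{\pi}_i f) = x_{n-i}x_{n+1-i}\,\partial_{n-i}\big(P\,x_{n+1-i}^{-1}x_{n-i}^{-1}\iota(f)\big) = \partial_{n-i}(P\,\iota(f)) = \partial_{n-i}(r_{m,n}(f))$. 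I do not expect a serious obstacle here; the one point requiring care is bookkeeping the shift $i \leftrightarrow n-i$ versus $i+1 \leftrightarrow n+1-i$ and confirming adjacency so that the relevant divided difference is indexed by $n-i$ rather than something else, together with checking at each stage that the monomial we slide past $\partial_{n-i}$ is genuinely symmetric in $x_{n-i}, x_{n-i+1}$. A secondary bit of care: making sure the "$r_{m,n}$ is also defined on $\dots$" clause is justified, i.e. that applying $\partial_i, \pi_i, \widehat{\pi}_i$ keeps all variable degrees within $[0,m]$ — for $\partial_i$ this is immediate, and for $\pi_i, \widehat{\pi}_i$ it follows from the known fact that these operators preserve the property of being a nonnegative-exponent polynomial with the stated degree bound when applied within the appropriate range (alternatively, one restricts attention to the cases where the lemma is actually invoked, namely the recursions defining $\fS_w$ and $\topLas_\alpha$).
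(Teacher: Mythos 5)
Your route is sound and genuinely different from the paper's: the paper proves the lemma by linearity, reducing to $f=x^\alpha$ and declaring the monomial case a routine check, whereas you verify the identities at the operator level via the intertwining $\iota\circ s_i=s_{n-i}\circ\iota$ and the fact that $s_{n-i}$-invariant factors commute with $\partial_{n-i}$. This buys a case-free argument (no monomial analysis), and it also makes transparent why the index shifts to $n-i$; your first and third chains are correct as written (note that your formula $\widehat{\pi}_i(f)=\partial_i(x_ix_{i+1}f)$ agrees with the paper's $\widehat{\pi}_i(f)=x_ix_{i+1}\partial_i(f)$ precisely because $x_ix_{i+1}$ is $s_i$-invariant).

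There is, however, a local error in the second chain, of exactly the kind you warn against at the end. The intermediate equality $x_{n-i}x_{n+1-i}\,\partial_{n-i}\bigl(P\,x_{n+1-i}^{-1}\iota(f)\bigr)=x_{n-i}\,\partial_{n-i}\bigl(P\,\iota(f)\bigr)$ is false: $x_{n+1-i}^{-1}$ is not $s_{n-i}$-invariant, so it cannot be slid past $\partial_{n-i}$. The next step is also not licensed as written, since $x_j\partial_j(g)\neq\partial_j(x_jg)=\pi_j(g)$ in general (they differ by $s_jg$). The two slips cancel, and the endpoints of your chain are in fact equal; the correct manipulation is to move the $s_{n-i}$-invariant monomial $x_{n-i}x_{n+1-i}$ \emph{inside} the divided difference:
\begin{equation*}
x_{n-i}x_{n+1-i}\,\partial_{n-i}\bigl(P\,x_{n+1-i}^{-1}\iota(f)\bigr)
=\partial_{n-i}\bigl(x_{n-i}x_{n+1-i}\cdot P\,x_{n+1-i}^{-1}\iota(f)\bigr)
=\partial_{n-i}\bigl(x_{n-i}\,P\,\iota(f)\bigr)
=\pi_{n-i}\bigl(r_{m,n}(f)\bigr).
\end{equation*}
With that one line repaired (and a sentence, rather than a gesture, on why $\pi_i$ and $\widehat{\pi}_i$ preserve the degree bound $m$ so that $r_{m,n}$ is defined on their outputs), your proof is complete and is a valid alternative to the paper's monomial check.
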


\begin{proof}
It is enough to assume $f = x^\alpha$,
which is a routine check.
\end{proof}

\subsection{Relating Schubert polynomials to top Lascoux polynomials}

In this subsection, we establish that each Schubert polynomial
is the reverse complement of a top Lascoux polynomial 
and vice versa.
We start by describing a variation of the map introduced by Fulton~\cite[(3.4)]{ful}.

\begin{defn}
Let $\alpha$ be a snowy weak composition.
Take any $m, n$ such that $\supp(\alpha) \subseteq [n]$
and $m \geq \max(\alpha)$.
The \definition{$(m,n)$-standardization} of $\alpha$,
denoted as $\std_{m,n}(\alpha)$ is the unique permutation 
$w$ satisfying $w(n+1) < w(n+2) < \cdots$ and 

\begin{align*}
w(i) := \begin{cases}
r_{m+1, n}(\alpha)_i & \text{if $r_{m+1, n}(\alpha)_i \leq m$} \\
m + |\{j \in [i]: r_{m+1, n}(\alpha)_j = m+1\}| &\text{if $r_{m+1, n}(\alpha)_i = m+1$,}
\end{cases}
\end{align*}
for any $i \in [n]$.
\end{defn}

For instance, 
if $\alpha = (2,4,0,6,0,0,1)$,
then $std_{6,7}(\alpha)$ has one-line notation
$[6,7,8,1,9,3,5,2,4]$.

Then we can describe the relation between
top Lascoux polynomials and Schubert polynomials.

\begin{thm}
\label{T: topLas as reverse Schubert}
Let $\alpha$ be a snowy weak composition.
Take any $m, n$ such that $\supp(\alpha) \subseteq [n]$
and $m \geq \max(\alpha)$.
Let $w$ be the permutation $\std_{m,n}(\alpha)$.
Then
$$
r_{m, n}(\topLas_{\alpha}) = \fS_w.
$$
\end{thm}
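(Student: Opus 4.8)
The plan is to prove the identity by induction on the number of inversions of $w$, equivalently on the number of descents needed to sort $\alpha$ into a partition, mirroring the recursive definitions of $\topLas_\alpha$ and $\fS_w$. The two ingredients are Lemma~\ref{L: commute operators}, which tells us how $r_{m,n}$ intertwines $\widehat{\pi}_i$ with $\partial_{n-i}$, and a combinatorial bookkeeping lemma showing that the standardization map $\std_{m,n}$ is compatible with these recursions.

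\medskip

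First I would handle the base case: suppose $\alpha$ is a partition. Then $\topLas_\alpha = x^\alpha$, so $r_{m,n}(\topLas_\alpha) = x^{r_{m,n}(\alpha)}$. On the other side, I must check that $w = \std_{m,n}(\alpha)$ is a dominant permutation with $\invcode(w) = r_{m,n}(\alpha)$, so that $\fS_w = x^{\invcode(w)} = x^{r_{m,n}(\alpha)}$. Here $r_{m+1,n}(\alpha) = (m+1-\alpha_n, \dots, m+1-\alpha_1)$ is weakly increasing since $\alpha$ is weakly decreasing; one then unwinds the definition of $\std_{m,n}$ to see that the values $\leq m$ are placed in weakly increasing order at the front and the ``overflow'' values (those equal to $m+1$, coming from the zero entries of $\alpha$) are spread out as $m+1, m+2, \dots$, and a direct computation of the Rothe diagram gives $\invcode(w)_i = m - \alpha_{n+1-i} = r_{m,n}(\alpha)_i$. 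This is a routine but slightly fiddly check; the example $\alpha = (2,4,0,6,0,0,1)$ given in the text is a useful sanity case (though note that example is not a partition, so one should use a decreasing instance).

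\medskip

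For the inductive step, suppose $\alpha$ is not a partition, so there is an $i$ with $\alpha_i < \alpha_{i+1}$, and $\topLas_\alpha = \widehat{\pi}_i(\topLas_{s_i \alpha})$. Applying $r_{m,n}$ and Lemma~\ref{L: commute operators} gives $r_{m,n}(\topLas_\alpha) = \partial_{n-i}(r_{m,n}(\topLas_{s_i\alpha}))$. By the inductive hypothesis $r_{m,n}(\topLas_{s_i\alpha}) = \fS_{w'}$ where $w' = \std_{m,n}(s_i \alpha)$ (note $s_i\alpha$ is still snowy with the same support bound and max). So it remains to show $w = w' s_{n-i}$ and that $w'(n-i) < w'(n-i+1)$, i.e. that $\partial_{n-i}(\fS_{w'}) = \fS_{w' s_{n-i}} = \fS_w$. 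This reduces to the key combinatorial claim: \emph{$\std_{m,n}(s_i\alpha) = \std_{m,n}(\alpha)\, s_{n-i}$, with $\std_{m,n}(\alpha)$ having an ascent in position $n-i$.} The point is that swapping entries $i$ and $i+1$ of $\alpha$ corresponds, under the reverse-complement $r_{m+1,n}$, to swapping positions $n+1-i$ and $n-i$, hence (shifting index) to swapping positions $n-i$ and $n-i+1$ of the word $r_{m+1,n}(\alpha)$; and the standardization procedure is ``local'' enough that it commutes with such an adjacent transposition, turning it into the corresponding adjacent transposition $s_{n-i}$ of the permutation. One must be slightly careful about the overflow case where $r_{m+1,n}(\alpha)$ has a value $m+1$ in position $n-i$ or $n-i+1$: if both are $m+1$ then $s_i\alpha = \alpha$ at those entries (impossible since $\alpha_i < \alpha_{i+1}$), and if exactly one is $m+1$ the standardized values at those two positions are still swapped correctly because the running count $|\{j \leq \cdot : r_{m+1,n}(\alpha)_j = m+1\}|$ changes in exactly the way that realizes the transposition. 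Checking that $\alpha_i < \alpha_{i+1}$ forces $w'(n-i) < w'(n-i+1)$ is then immediate from these value computations.

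\medskip

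The main obstacle is this last combinatorial lemma about $\std_{m,n}$ --- verifying that standardization intertwines the two adjacent-transposition actions, with honest attention to the overflow/tie-breaking case. Everything else (the operator identity, the base case) is mechanical. A clean way to organize it would be to isolate and state separately a lemma: ``for a snowy $\alpha$ with $\alpha_i < \alpha_{i+1}$, $\std_{m,n}(\alpha)$ has an ascent at $n-i$ and $\std_{m,n}(s_i\alpha) = \std_{m,n}(\alpha) s_{n-i}$'', prove it by the case analysis above, and then the theorem follows in two lines from Lemma~\ref{L: commute operators} and induction.
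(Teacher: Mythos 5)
Your proposal is essentially the paper's own proof: the same induction along the two recursions, the same base case (checking that $\std_{m,n}(\alpha)$ is dominant with inversion code $r_{m,n}(\alpha)$ when $\alpha$ is a partition), and the same inductive step via Lemma~\ref{L: commute operators} combined with the claim $\std_{m,n}(s_i\alpha)=\std_{m,n}(\alpha)\,s_{n-i}$, which the paper likewise leaves as a routine check. One correction in the inductive step: the ascent must sit on $w=\std_{m,n}(\alpha)$, i.e.\ $w(n-i)<w(n-i+1)$, so that $w'=w s_{n-i}$ has a \emph{descent} at $n-i$ and $\partial_{n-i}(\fS_{w'})=\fS_{w}$; your assertions that $w'(n-i)<w'(n-i+1)$ are backwards (with an ascent on $w'$ the divided difference would return $0$), though the isolated lemma you formulate at the end states the condition correctly. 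A smaller slip in the base case: for a partition $\alpha$ the word $r_{m+1,n}(\alpha)$ has the overflow values $m+1$ at the front (from the trailing zeros of $\alpha$) and the values $m+1-\alpha_k,\dots,m+1-\alpha_1\leq m$ at the back in decreasing order, not the arrangement you describe; your concluding formula $\invcode(w)_i=m-\alpha_{n+1-i}=r_{m,n}(\alpha)_i$ is nevertheless the right one.
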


For instance, let $\alpha = (2,4,0,6,0,0,1)$,
$m = 6$ and $n = 7$.
Then $r_{6,7}(\topLas_\alpha) = \fS_{[6,7,8,1,9,3,5,2,4]}$.
\begin{proof}
Prove by induction on $\alpha$.
For the base case, assume $\alpha$ is a partition
with $\supp(\alpha) = [k]$.
Then $r_{m+1, n}(\alpha) = ( m+1, \cdots, m+1, m+1 - \alpha_k, \cdots, m+1 - \alpha_1)$.
The first $n$ numbers in the one-line notation of $w$ are
$$m+1, m+2, \cdots, m + n - k, m+1 - \alpha_k, \cdots, m+1 - \alpha_1.$$
Thus, we have $\invcode(w) = (m, \cdots, m, m - \alpha_k, \cdots, m - \alpha_1) = r_{m,n}(\alpha)$,
so $w$ is a dominant permutation.
By the definition of Schubert polynomials, 
$$\fS_w = x^{r_{m,n}(\alpha)} = r_{m,n}(x^\alpha) = \topLas_\alpha.$$

Now suppose $\alpha_i < \alpha_{i+1}$.
It is routine to check $w(n-i) < w(n-i+1)$ and 
$\std_{m,n}(s_i \alpha) = ws_{n-i}$.
Then the proof is finished by Lemma~\ref{L: commute operators}:
$$\fS_w = \partial_{n-i} (\fS_{w s_{n-i}}) =  \partial_{n-i} (\fS_{\std_{m,n}(s_i \alpha)}) = \partial_{n-i}(r_{m,n}(\topLas_{s_i \alpha})) = r_{m,n}(\widehat{\pi}_i(\topLas_{s_i \alpha})) = r_{m,n}(\topLas_\alpha).$$
\end{proof}

\begin{exa}
We can understand Theorem~\ref{T: topLas as reverse Schubert}
via a commutative diagram.
For instance, the equation $r_{4,5}(\topLas_{(2,0,4,0,1}) = \topLas_{[4,5,1,6,3,2]}$
is implied by the following commutative diagram.

\[
\begin{tikzcd}
\fS_{[5,6,4,3,1,2]} \arrow[r,"\partial_2"] \arrow[d,"r_{4,5}"] & 
\fS_{[5,4,6,3,1,2]} \arrow[r,"\partial_1"] \arrow[d,"r_{4,5}"] & 
\fS_{[4,5,6,3,1,2]} \arrow[r,"\partial_4"] \arrow[d,"r_{4,5}"] &
\fS_{[4,5,6,1,3,2]} \arrow[r,"\partial_3"] \arrow[d,"r_{4,5}"] & 
\fS_{[4,5,1,6,3,2]} \arrow[d,"r_{4,5}"]
\\
\topLas_{(4,2,1,0,0)} \arrow[r,"\widehat{\pi}_3"] \arrow[u] & 
\topLas_{(4,2,0,1,0)} \arrow[r,"\widehat{\pi}_4"] \arrow[u] & 
\topLas_{(4,2,0,0,1)} \arrow[r,"\widehat{\pi}_1"] \arrow[u] &
\topLas_{(2,4,0,0,1)} \arrow[r,"\widehat{\pi}_2"] \arrow[u] & 
\topLas_{(2,0,4,0,1)} \arrow[u]
\end{tikzcd}
\]
\end{exa}

Consequently, every Schubert polynomial is the reverse complement
of a top Lascoux polynomial. 
\begin{cor}
Consider $w \in S_n$.
Let $\alpha = (n+1 - w(n), \cdots, n+1 - w(2), n+1-w(1))$.
Then $\fS_w = r_{n,n}(\topLas_\alpha)$.
\end{cor}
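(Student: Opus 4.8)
The plan is to derive this corollary as a direct specialization of Theorem~\ref{T: topLas as reverse Schubert}, so almost all the work has already been done. First I would observe that the weak composition $\alpha = (n+1-w(n), \dots, n+1-w(2), n+1-w(1))$ is exactly $r_{n,n}(\invcode(w)^{\mathsf{rev-ish}})$---more precisely, I would check that $\alpha$ has $\supp(\alpha) \subseteq [n]$ and that $\max(\alpha) \le n$, since $w(i) \ge 1$ forces $n+1-w(i) \le n$, and $w(i) \le n$ forces $n+1-w(i) \ge 1$ (so in fact every entry of $\alpha$ is in $\{1, \dots, n\}$, and the entries $n+1-w(i)$ over $i \in [n]$ are distinct, hence $\alpha$ is snowy). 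Thus Theorem~\ref{T: topLas as reverse Schubert} applies with this $\alpha$, $m = n$, and the ambient index also $n$.

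The key step is then to verify that $\std_{n,n}(\alpha) = w$. Unwinding the definition of $(m,n)$-standardization with $m = n$: we need $r_{n+1,n}(\alpha)$. Since $r_{m+1,n} = r_{n+1,n}$ reverses and complements against $n+1$, and $\alpha$ itself was built by reversing and complementing $(w(1), \dots, w(n))$ against $n+1$, the double application gives $r_{n+1,n}(\alpha)_i = w(i)$ for all $i \in [n]$. Now in the standardization formula, the first branch applies precisely when $r_{n+1,n}(\alpha)_i \le n$, which holds for every $i$ since $w(i) \le n$; the second branch (the case $= n+1 = m+1$) never occurs. Hence $w' := \std_{n,n}(\alpha)$ satisfies $w'(i) = w(i)$ for $i \in [n]$, and the tail condition $w'(n+1) < w'(n+2) < \cdots$ together with $w \in S_n$ (so $w$ fixes everything beyond $n$) forces $w' = w$. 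Feeding this into Theorem~\ref{T: topLas as reverse Schubert} yields $r_{n,n}(\topLas_\alpha) = \fS_w$, which is the claim.

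The main (and only) obstacle is bookkeeping: making sure the reverse-complement is applied against the right modulus ($n+1$ in the definition of $\std$, versus $n$ in $r_{n,n}$ acting on polynomials) and that the index ranges line up, since the theorem is stated for a general pair $(m,n)$ with $m \ge \max(\alpha)$ and here we are taking the smallest admissible $m$, namely $m = n$ (noting $\max(\alpha) \le n$). I would also remark that $\alpha$ is automatically snowy, which is needed for $\topLas_\alpha$ to be defined, and that this corollary shows the map $w \mapsto \alpha$ is a bijection from $S_n$ onto snowy weak compositions supported in $[n]$ with all entries $\le n$, dual to $\std_{n,n}$; but this last observation is not needed for the proof itself.
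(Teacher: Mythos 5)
Your proposal is correct and matches the paper's proof: both reduce the corollary to Theorem~\ref{T: topLas as reverse Schubert} by checking that $\std_{n,n}(\alpha) = w$ (the paper states this without detail, while you spell out the verification that $r_{n+1,n}(\alpha)_i = w(i) \le n$ so only the first branch of the standardization occurs, and that $\alpha$ is snowy with $\max(\alpha) \le n$). No gaps.
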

\begin{proof}
Notice that $\std_{n,n}(\alpha) = w$.
Then the proof is finished by 
Theorem~\ref{T: topLas as reverse Schubert}.
\end{proof}

\subsection{A diagrammatic perspective of standardization}
In this subsection, 
we interpret the standardization map $\std_{m,n}$
in terms of diagrams. 
Recall that each snowy weak composition $\alpha$ is associated
with a labeled diagram $\snow(D(\alpha))$.
Each permutation $w$ is associated with the Rothe diagram $RD(w)$.
We describe the relationship between $\snow(D(\alpha))$
and $RD(\std_{m,n}(\alpha))$.

\begin{exa}
\label{E: compare diagrams}
Consider the snowy weak composition $\alpha = (0,4,2)$.
Let $m = 4$ and $n = 3$.
Let $w = \std_{m,n}(\alpha) = [3,1,5,2,4]$.
We depict $\snow(D(\alpha))$ and $RD(w)$ as follows: 
$$
\snow(D(\alpha)) = 
\raisebox{0.45cm}{
\begin{ytableau}
\none[1] & \none & \snowflake & \none & \snowflake \cr
\none[2] & \cdot & \cdot  & \cdot & \bullet\cr
\none[3] & \cdot & \bullet
\end{ytableau}}
\quad\quad
RD(w) = 
\raisebox{0.45cm}{
\begin{ytableau}
\none[1] & \cdot & \cdot \cr
\none[2] \cr
\none[3] & \none& \cdot & \none & \cdot
\end{ytableau}}
$$

We observe that both diagrams live in the first $3$ 
rows and $4$ columns. 
Imagine that we ignore the labels of $\snow(D(\alpha))$ 
and put it in an $3 \times 4$ box.
Then we take its complement within the box
and rotate the box by $180^\circ$.
What we get is exactly $RD(w)$. 

$$
\begin{tikzpicture}[x=1.5em,y=1.5em,thick,color = blue]
\draw[step=1,gray,ultra thin,dashed] (0,0) grid (4,3);
\draw[color=red, very thick, sharp corners] (0,0) rectangle (4,3);
\draw[color=black, thick, sharp corners] (0,1) rectangle (1,2);
\filldraw[black] (0.5,1.5) circle (0.2pt);
\draw[color=black, thick, sharp corners] (0,0) rectangle (1,1);
\filldraw[black] (0.5,0.5) circle (0.2pt);
\draw[color=black, thick, sharp corners] (1,2) rectangle (2,3);
\filldraw[black] (1.5,2.5) circle (0.2pt);
\draw[color=black, thick, sharp corners] (1,1) rectangle (2,2);
\filldraw[black] (1.5,1.5) circle (0.2pt);
\draw[color=black, thick, sharp corners] (1,0) rectangle (2,1);
\filldraw[black] (1.5,0.5) circle (0.2pt);
\draw[color=black, thick, sharp corners] (2,1) rectangle (3,2);
\filldraw[black] (2.5,1.5) circle (0.2pt);
\draw[color=black, thick, sharp corners] (3,2) rectangle (4,3);
\filldraw[black] (3.5,2.5) circle (0.2pt);
\draw[color=black, thick, sharp corners] (3,1) rectangle (4,2);
\filldraw[black] (3.5,1.5) circle (0.2pt);
\end{tikzpicture}
\quad \raisebox{0.8cm}{$\xrightarrow{\textrm{complement}}$} \quad 
\begin{tikzpicture}[x=1.5em,y=1.5em,thick,color = blue]
\draw[step=1,gray,ultra thin,dashed] (0,0) grid (4,3);
\draw[color=red, very thick, sharp corners] (0,0) rectangle (4,3);
\draw[color=black, thick, sharp corners] (0,2) rectangle (1,3);
\filldraw[black] (0.5,2.5) circle (0.2pt);
\draw[color=black, thick, sharp corners] (2,2) rectangle (3,3);
\filldraw[black] (2.5,2.5) circle (0.2pt);
\draw[color=black, thick, sharp corners] (2,0) rectangle (3,1);
\filldraw[black] (2.5,0.5) circle (0.2pt);
\draw[color=black, thick, sharp corners] (3,0) rectangle (4,1);
\filldraw[black] (3.5,0.5) circle (0.2pt);
\end{tikzpicture}
\quad \raisebox{0.8cm}{$\xrightarrow{\textrm{rotate}}$} \quad 
\begin{tikzpicture}[x=1.5em,y=1.5em,thick,color = blue]
\draw[step=1,gray,ultra thin,dashed] (0,0) grid (4,3);
\draw[color=red, very thick, sharp corners] (0,0) rectangle (4,3);
\draw[color=black, thick, sharp corners] (0,2) rectangle (1,3);
\filldraw[black] (0.5,2.5) circle (0.2pt);
\draw[color=black, thick, sharp corners] (1,2) rectangle (2,3);
\filldraw[black] (1.5,2.5) circle (0.2pt);
\draw[color=black, thick, sharp corners] (1,0) rectangle (2,1);
\filldraw[black] (1.5,0.5) circle (0.2pt);
\draw[color=black, thick, sharp corners] (3,0) rectangle (4,1);
\filldraw[black] (3.5,0.5) circle (0.2pt);
\end{tikzpicture}
$$
\end{exa}

To characterize this relation in general, 
we define the operator $r_{m,n}$ on certain diagrams.
\begin{defn}
Let $D$ be a diagram that lives in the first $n$
rows and first $m$ columns. 
Let $r_{m,n}(D)$ be the diagram obtained from the following process:
Place $D$ in a $n \times m$ box, take its complement within the box,
and rotate the box by $180^\circ$.
\end{defn}

\begin{lemma}
\label{L: compare diagram}
Let $\alpha$ be a snowy weak composition.
Take $n$ with $\supp(\alpha) \subseteq [n]$
and $m$ with $\max(\alpha) \leq m$.
Let $w = \std_{m,n}(\alpha)$.
Then $RD(w) = r_{m,n}(\snow(D(\alpha)))$.
\end{lemma}

\begin{proof}
If we ignore the labels, 
$\snow(D(\alpha))$ consists of cells $(r,c)$
such that $\alpha_r \geq c$
or $\alpha_i = c$ for some $i > r$.
Clearly, $\snow(D(\alpha))$ lies in the first $n$
rows and $m$ columns. 
If we complement $\snow(D(\alpha))$ 
within the $n \times m$ box,
the resulting diagram consists of $(r,c) \in [m] \times [n]$
such that 
$\alpha_r < c$ and 
$\alpha_i \neq c$ for any $i > r$.

Let $D' = r_{m,n}(\snow(D(\alpha)))$. 
We check $D'$ and $RD(w)$ agree row by row.
They clearly agree under row $n$. 
Consider $r \in [n]$.
If $\alpha_{n+1-r} = 0$,
then $D'$ has no cells in row $r$.
Also, $w(r) \geq m+1$.
By the definition of standardization map,
there are no inversions of $w$ of the form $(r, r')$.
Thus, $RD(w)$ also has no cells in row $r$.
If $\alpha_{n+1-r} > 0$,
$D'$ has cells in column $c$ for $c$
such that $c < m+1 - \alpha_{n+1-r}$
and $c \neq m+1 - \alpha_{n+1-i}$ for all $i \in [r]$.
In other words, that is all $c$ such that 
$c < w(r)$ and $c$ is not in $w(1), \cdots, w(r)$.
Therefore, $D'$ and $RD(w)$ agree on row $r$.

\end{proof} 

\section{Relations between top Lascoux structure constants and the Schubert structure constants}

\label{S: structure constants}

Recall that $\{\topLas_\alpha: \alpha \textrm{ is snowy}\}$
forms a basis of the algebra $\widehat{V}$.
The top Lascoux structure constant $d^{\delta}_{\alpha, \gamma}$
is the coefficient of $\topLas_\delta$
in the expansion of $\topLas_\alpha \topLas_\gamma$.
At this point, 
we do not have any reason to believe that 
they are positive integers. 
Surprisingly, the connection between top Lascoux
polynomials and Schubert polynomials
establishes a bridge between $d^{\delta}_{\alpha, \gamma}$
and the Schubert structure constants $c^w_{u,v}$. 
First, we describe a necessary condition
for $d^{\delta}_{\alpha, \gamma}$ to be non-zero.

\begin{lemma}
\label{L: condition for d nonzero}
Let $\alpha$, $\gamma$ and $\delta$ be snowy weak compositions. 
Find $m_1, m_2$ and $n$
such that $m_1 \geq \max(\alpha)$, $m_2 \geq \max(\gamma)$,
$\supp(\alpha) \subseteq [n]$ and $\supp(\gamma) \subseteq [n]$.
If $d^{\delta}_{\alpha, \gamma} \neq 0$,
we must have $\supp(\delta) \subseteq [n]$
and $\max(\delta) \leq m_1 + m_2$.
\end{lemma}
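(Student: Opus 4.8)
The plan is to reduce this statement about top Lascoux structure constants to an elementary fact about multiplying monomials, using the reverse complement involution $r_{m,n}$ and the support/leading-monomial control provided by $\rajcode$. First I would fix $N := m_1 + m_2$ and observe that every top Lascoux polynomial appearing is supported in variables $x_1, \dots, x_n$ with each exponent bounded: $\topLas_\alpha$ has leading monomial $x^{\rajcode(\alpha)}$, and since $\supp(\alpha) \subseteq [n]$ we get $\supp(\topLas_\alpha) \subseteq \{$weak compositions supported on $[n]\}$ with all exponents at most $\max(\rajcode(\alpha)) \le m_1 + (n-1)$; the key point I actually need is the cruder bound that every monomial of $\topLas_\alpha \topLas_\gamma$ is supported on $[n]$, which is clear because $\topLas_\alpha$ and $\topLas_\gamma$ are each supported on $[n]$ (they are built from $x^\alpha$ resp.\ $x^\gamma$ by applying operators $\widehat{\pi}_i$ with $i \in [n-1]$, which do not introduce new variables beyond $x_n$). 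Hence every $\topLas_\delta$ occurring with $d^\delta_{\alpha,\gamma} \neq 0$ must itself be supported on $[n]$; since distinct snowy weak compositions have distinct $\rajcode$ and $\topLas_\delta$ has leading monomial $x^{\rajcode(\delta)}$, this forces $\supp(\rajcode(\delta)) \subseteq [n]$, hence $\supp(\delta) \subseteq [n]$. This gives the first conclusion.

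For the bound $\max(\delta) \le m_1 + m_2$, the plan is to pass through $r_{N,n}$. By Theorem~\ref{T: topLas as reverse Schubert}, $r_{m_1, n}(\topLas_\alpha) = \fS_u$ and $r_{m_2, n}(\topLas_\gamma) = \fS_v$ where $u = \std_{m_1,n}(\alpha)$ and $v = \std_{m_2,n}(\gamma)$. The operator $r_{m,n}$ is a degree-reversing rescaling: if $\topLas_\alpha = \sum_\beta c_\beta x^\beta$ with every $\beta$ supported on $[n]$ and $\max(\beta) \le m_1$, then $r_{N,n}(\topLas_\alpha) = (x_1 \cdots x_n)^{N} \topLas_\alpha(x_n^{-1}, \dots, x_1^{-1})$ makes sense and equals $(x_1 \cdots x_n)^{N - m_1}$ times a polynomial whose monomials all have exponents $\le m_1$; more usefully, I want to apply $r_{N,n}$ directly to the product. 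So the key computation I would carry out: every monomial $x^\beta$ of $\topLas_\alpha \topLas_\gamma$ satisfies $\max(\beta) \le m_1 + m_2$, because $x^\beta = x^{\beta'} x^{\beta''}$ with $x^{\beta'} \in \supp(\topLas_\alpha)$, $x^{\beta''} \in \supp(\topLas_\gamma)$, and hence $\beta_i = \beta'_i + \beta''_i \le m_1 + m_2$ for each $i$. The only input needed here is that $\max(\beta') \le m_1$ for every $\beta' \in \supp(\topLas_\alpha)$ and $\max(\beta'') \le m_2$ for every $\beta'' \in \supp(\topLas_\gamma)$.

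Thus the real content is the claim: \emph{if $\alpha$ is snowy with $\max(\alpha) \le m$ and $\supp(\alpha) \subseteq [n]$, then every monomial $x^\beta$ of $\topLas_\alpha$ has $\max(\beta) \le m$ and $\supp(\beta) \subseteq [n]$.} I expect this to be the main obstacle, and I would prove it by induction on $\alpha$ along the recursion (\ref{EQ: Define top Las}). The base case $\alpha$ a partition is immediate since $\topLas_\alpha = x^\alpha$. For the inductive step with $\alpha_i < \alpha_{i+1}$, we have $\topLas_\alpha = \widehat{\pi}_i(\topLas_{s_i \alpha}) = x_i x_{i+1} \partial_i(\topLas_{s_i \alpha})$; note $s_i \alpha$ is snowy with the same multiset of parts, so $\max(s_i\alpha) \le m$ and $\supp(s_i\alpha) \subseteq [n]$ (using $i+1 \le n$). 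The cleaner route is to use Theorem~\ref{T: topLas as reverse Schubert} and Lemma~\ref{L: compare diagram}: $r_{m,n}(\topLas_\alpha) = \fS_w$ with $w = \std_{m,n}(\alpha)$, and $\fS_w$ has nonnegative exponents, so every monomial of $\topLas_\alpha = r_{m,n}(\fS_w)$ has exponents of the form $m - (\text{nonneg})$, i.e.\ $\le m$, and is automatically supported on $[n]$ since $\fS_w$ involves only $x_1, \dots, x_n$ (as $w \in S_n$ makes $\fS_w \in \Q[x_1,\dots,x_n]$; more precisely $\fS_w$ involves only $x_1, \dots, x_{n-1}$, which is even better). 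This sidesteps the induction entirely. Putting the pieces together: every monomial of $\topLas_\alpha \topLas_\gamma$ is supported on $[n]$ with exponents $\le m_1 + m_2$, hence so is $x^{\rajcode(\delta)}$ whenever $d^\delta_{\alpha,\gamma} \neq 0$, which gives $\supp(\delta) \subseteq \supp(\rajcode(\delta)) \subseteq [n]$ and $\max(\delta) \le \max(\rajcode(\delta)) \le m_1 + m_2$.
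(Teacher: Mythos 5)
Your preliminary bounds on the product are fine and are essentially what the paper asserts in one sentence: by Theorem~\ref{T: topLas as reverse Schubert} (or the recursion), every monomial of $\topLas_\alpha$ is supported on $[n]$ with exponents at most $m_1$, likewise for $\topLas_\gamma$, hence every monomial of $\topLas_\alpha\topLas_\gamma$ is supported on $[n]$ with exponents at most $m_1+m_2$. The gap is in the final inference: from ``every monomial of the product satisfies these constraints'' you conclude ``hence so does $x^{\rajcode(\delta)}$ whenever $d^\delta_{\alpha,\gamma}\neq 0$.'' That presumes $x^{\rajcode(\delta)}$ actually occurs with nonzero coefficient in the product, i.e.\ that it does not cancel in the expansion $\sum_\sigma d^\sigma_{\alpha,\gamma}\topLas_\sigma$. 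Distinctness of the $\rajcode$'s does not give this: $x^{\rajcode(\delta)}$ is only the tail-lexicographic \emph{leading} monomial of $\topLas_\delta$, and it may perfectly well appear as a lower-order monomial of some other $\topLas_{\sigma'}$ whose $\rajcode(\sigma')$ is tail-lex larger. Its coefficient in the product is $d^\delta_{\alpha,\gamma}$ plus the sum over $\sigma'\neq\delta$ of $d^{\sigma'}_{\alpha,\gamma}$ times the coefficient of $x^{\rajcode(\delta)}$ in $\topLas_{\sigma'}$, and at this stage the constants $d^{\sigma}_{\alpha,\gamma}$ are not known to be nonnegative (that is precisely the point of the section), so cancellation cannot be ruled out by positivity.

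The missing idea is the extremality (triangularity) argument the paper uses. Among the $\sigma$ with $d^{\sigma}_{\alpha,\gamma}\neq 0$, take the tail-lexicographically largest; by Lemma~\ref{L: rajcode tail-lex} its $\rajcode$ strictly dominates $\rajcode(\sigma')$ for every other contributing $\sigma'$, hence dominates every monomial of every other $\topLas_{\sigma'}$, so $x^{\rajcode(\sigma)}$ survives with coefficient exactly $d^{\sigma}_{\alpha,\gamma}\neq 0$; this gives $\supp(\sigma)\subseteq[n]$, and maximality then transfers the support bound to all contributing $\sigma'$. The exponent bound needs a second extremal choice: take $\sigma$ with $\max(\sigma)$ largest (ties broken tail-lexicographically) and check that $x^{\rajcode(\sigma)}$, which has an entry equal to $\max(\sigma)$, cannot occur in any other contributing $\topLas_{\sigma'}$, so it survives and yields $\max(\sigma)\leq m_1+m_2$, which again transfers to all contributing $\sigma'$. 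Your ``for every $\delta$ directly'' shortcut only becomes legitimate a posteriori, once Theorem~\ref{T: structure constants} identifies the $d$'s with Schubert structure constants; as a proof of this lemma it is circular without the extremal step.
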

The proof relies heavily on the statistic $\rajcode$.

\begin{proof}
First, expand 
\begin{equation}
\label{EQ: product of top L}
\topLas_\alpha \times \topLas_\gamma = 
\sum_{\sigma} d^{\sigma}_{\alpha, \gamma} \topLas_\sigma.    
\end{equation}
We know the left hand side uses only variables $x_1, \dots, x_n$.
Moreover, in any monomial on the left hand side, 
each variable has power at most $m_1 + m_2$.
Let $S$ be the set of all $\sigma$ with $d^{\sigma}_{\alpha, \gamma} \neq 0$.
Among $S$, find the largest $\sigma$ in tail-lexicographical order.
By Lemma~\ref{L: rajcode tail-lex}, $\rajcode(\sigma)$ is also 
larger than $\rajcode(\sigma')$ for any $\sigma' \in S$.
Thus, $x^{\rajcode(\sigma)}$ has non-zero coefficient 
on the right hand side of (\ref{EQ: product of top L}),
so $\supp(\sigma) \subseteq [n]$.
It follows that $\supp(\sigma') \subseteq [n]$
for any $\sigma' \in S$.

Now find $\sigma \in S$ with the largest $\max(\sigma)$,
break ties by picking the largest in tail-lexicographical order.
Say $\max(\sigma) = m$.
In $\rajcode(\alpha)$, one entry is $m$.
We can see $\rajcode(\alpha)$ cannot appear in $\topLas_{\sigma'}$
for any other $\sigma' \in S$:
If so, then $\sigma'$ has an entry at least $m$
and $\rajcode(\sigma')$ is larger than $\rajcode(\sigma)$,
contradicting to the maximality of $\sigma$.
Thus, $x^\rajcode(\sigma)$ has non-zero coefficient 
on the right hand side of (\ref{EQ: product of top L}),
so $m \leq m_1 + m_2$.
It follows that $\max(\sigma') \leq m_1 + m_2$
for any $\sigma' \in S$.
\end{proof}

Now we describe the main theorem of this section.

\begin{thm}
\label{T: structure constants}
Let $\alpha$, $\gamma$ be snowy weak compositions. 
Find $m_1, m_2$ and $n$
such that $m_1 \geq \max(\alpha)$, $m_2 \geq \max(\gamma)$,
$\supp(\alpha) \subseteq [n]$ and $\supp(\gamma) \subseteq [n]$.
Let $u = \std_{m_1, n}(\alpha)$ and 
$v = \std_{m_2, n}(\gamma)$.
For any snowy weak composition $\delta$
with $\supp(\delta) \subseteq [n]$
and $\max(\delta) \leq m_1 + m_2$,
we let $w = \std_{m_1 + m_2, n}(\delta)$.
Then $d^{\delta}_{\alpha, \gamma} = c^w_{u,v}$.
\end{thm}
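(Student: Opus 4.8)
The plan is to reduce the statement to Theorem~\ref{T: topLas as reverse Schubert} by applying the operator $r_{m,n}$ with a \emph{single} common value of $m$ to all three polynomials involved in the product. The subtlety is that $\topLas_\alpha$ naturally pairs with $r_{m_1,n}$, $\topLas_\gamma$ with $r_{m_2,n}$, but the product $\topLas_\alpha\topLas_\gamma$ — and hence $\topLas_\delta$ — must be hit with $r_{m_1+m_2,n}$. So first I would record the key multiplicativity property of $r_{m,n}$: for polynomials $f$ using only $x_1,\dots,x_n$ with each variable of degree at most $m_1$ (resp.\ $m_2$), one has
\[
r_{m_1+m_2,n}(fg) = r_{m_1,n}(f)\,r_{m_2,n}(g).
\]
This is immediate on monomials from the definition $r_{m,n}(x^\alpha)=x^{r_{m,n}(\alpha)}$, since $r_{m_1+m_2,n}(\alpha+\beta)=r_{m_1,n}(\alpha)+r_{m_2,n}(\beta)$ when $\supp(\alpha),\supp(\beta)\subseteq[n]$ and the entrywise bounds hold; and it extends linearly because the relevant degree bounds are preserved under the expansion.

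Next I would take the defining expansion
\[
\topLas_\alpha\topLas_\gamma = \sum_{\delta} d^\delta_{\alpha,\gamma}\,\topLas_\delta,
\]
and apply $r_{m_1+m_2,n}$ to both sides. By Lemma~\ref{L: condition for d nonzero}, every $\delta$ occurring on the right has $\supp(\delta)\subseteq[n]$ and $\max(\delta)\le m_1+m_2$, so $r_{m_1+m_2,n}$ is defined on each $\topLas_\delta$ and $\std_{m_1+m_2,n}(\delta)$ makes sense; Theorem~\ref{T: topLas as reverse Schubert} then gives $r_{m_1+m_2,n}(\topLas_\delta)=\fS_{w_\delta}$ with $w_\delta=\std_{m_1+m_2,n}(\delta)$. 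On the left, the multiplicativity identity above together with two more applications of Theorem~\ref{T: topLas as reverse Schubert} yields
\[
r_{m_1+m_2,n}(\topLas_\alpha\topLas_\gamma) = r_{m_1,n}(\topLas_\alpha)\,r_{m_2,n}(\topLas_\gamma) = \fS_u\,\fS_v.
\]
Hence $\fS_u\fS_v = \sum_\delta d^\delta_{\alpha,\gamma}\,\fS_{w_\delta}$.

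Finally I would compare this with the Schubert structure-constant expansion $\fS_u\fS_v=\sum_w c^w_{u,v}\fS_w$. Since the Schubert polynomials are linearly independent, it suffices to know that $\delta\mapsto\std_{m_1+m_2,n}(\delta)$ is injective on the set of snowy weak compositions with $\supp\subseteq[n]$ and $\max\le m_1+m_2$ — this follows since $\rajcode$ is injective on snowy weak compositions (stated in the excerpt after the definition of $\rajcode$) and $r_{m,n}$, $\std_{m,n}$ are defined reversibly from $\alpha$; matching coefficients then gives $d^\delta_{\alpha,\gamma}=c^{w_\delta}_{u,v}$. I expect the main obstacle to be purely bookkeeping: verifying that the degree bounds needed to invoke $r_{m_1+m_2,n}$ hold throughout (which is exactly what Lemma~\ref{L: condition for d nonzero} was set up to supply) and confirming the multiplicativity identity's domain of validity; once those are in place the argument is a short diagram chase through Theorem~\ref{T: topLas as reverse Schubert}.
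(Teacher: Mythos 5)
Your proposal is correct and follows essentially the same route as the paper: apply $r_{m_1+m_2,n}$ to the expansion $\topLas_\alpha\topLas_\gamma=\sum_\sigma d^\sigma_{\alpha,\gamma}\topLas_\sigma$, use Lemma~\ref{L: condition for d nonzero} to ensure the operator is defined on every term, convert both sides to Schubert polynomials via Theorem~\ref{T: topLas as reverse Schubert}, and match coefficients. You even spell out two points the paper leaves implicit (the multiplicativity $r_{m_1+m_2,n}(fg)=r_{m_1,n}(f)r_{m_2,n}(g)$ and the injectivity of $\std_{m_1+m_2,n}$ needed for coefficient comparison), so nothing further is needed.
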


\begin{proof}
First, we have $\topLas_\alpha \times \topLas_\gamma = 
\sum_{\sigma} d^{\sigma}_{\alpha, \gamma} \topLas_\sigma$.
By Lemma~\ref{L: condition for d nonzero},
the sum is over all $\sigma$ with $\supp(\sigma) \subseteq [n]$
and $\max(\sigma) \leq m_1 + m_2$.
Apply $r_{m_1 + m_2, n}$ on both sides.
Using Theorem~\ref{T: topLas as reverse Schubert},
the left hand side becomes
\begin{align*}
r_{m_1 + m_2,n}(\topLas_\alpha \times \topLas_\gamma)
=r_{m_1,n}(\topLas_\alpha) r_{m_2,n}(\topLas_\gamma)
= \fS_u \fS_v.
\end{align*}
The right hand becomes 
$$
r_{m_1 + m_2,n}(\sum_{\sigma} d^{\sigma}_{\alpha, \gamma} \topLas_\sigma)
= \sum_{\sigma} d^{\sigma}_{\alpha, \gamma} r_{m_1 + m_2,n}(\topLas_\sigma)
= \sum_{\sigma} d^{\sigma}_{\alpha, \gamma} \fS_{\std_{m_1 + m_2, n}(\sigma)}.
$$

\end{proof}

\begin{exa}
\label{E: structure constants permutation}
Let $\alpha = (2,3,1,4)$ and $\gamma = (2,1,4,3)$.
We can let $m_1 = m_2 = n = 4$.
Then $u = \std_{4,4}(\alpha) = [1,4,2,3]$
and $v = \std_{4,4}(\gamma) = [2,1,4,3]$.
We compute
\begin{align*}
\topLas_{\alpha} \times \topLas_{\gamma} = 
\topLas_{(8,6,5,7)} + \topLas_{(6,8,4,7)} + \topLas_{(7,8,5,6)} 
+ \topLas_{(7,6,8,5)} + \topLas_{(6,7,8,4)}.\\
\fS_{u} \times \fS_{v} = \fS_{[2,4,3,1]} + \fS_{[2,5,1,3,4]} + \fS_{[3,4,1,2]} + \fS_{[4,1,3,2]} + \fS_{[5,1,2,3,4]}.      
\end{align*}

We check Theorem~\ref{T: structure constants} when $\delta = (8,6,5,7)$. 
We have $d^\delta_{\alpha, \gamma} = 1$.
Now we compute $w = \std_{4 + 4, 4}(\delta) = [2,4,3,1]$.
Indeed, $c^w_{u,v} = 1$.
\end{exa}

Theorem~\ref{T: structure constants} can express
each Schubert structure constant as a top Lascoux structure constant.
\begin{cor}
Take $u, v \in S_n$ and $w \in S_{2n}$.
Assume $w(n+1) < \cdots < w(2n)$.
Let $\alpha = (n+1 - u(n), \cdots, n+1-u(1))$,
$\gamma = (n+1 - v(n), \cdots, n+1-v(1))$
and $\delta = (n+1 - w(n), \cdots, n+1-w(1))$
Then $c^w_{u,v} = d^{\delta}_{\alpha, \gamma}$.
\end{cor}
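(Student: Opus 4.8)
The plan is to specialize Theorem~\ref{T: structure constants} with the parameters $m_1 = m_2 = n$ and ambient index $n$, after first checking that the permutations $u, v, w$ and the weak compositions $\alpha, \gamma, \delta$ produced here match those demanded by the theorem. Concretely, I would argue as follows. Since $u, v \in S_n$, we have $u(i), v(i) \in [n]$ for all $i \in [n]$, so every entry of $\alpha$ and $\gamma$ lies in $\{n+1-n, \dots, n+1-1\} = \{1, \dots, n\}$; in particular $\max(\alpha) \le n = m_1$ and $\max(\gamma) \le n = m_2$, and since the entries are indexed by positions $1, \dots, n$ we get $\supp(\alpha), \supp(\gamma) \subseteq [n]$. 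For $\delta$: because $w \in S_{2n}$, the values $w(1), \dots, w(n)$ are $n$ distinct elements of $[2n]$, so $n+1 - w(i)$ could a priori be negative; but the hypothesis $w(n+1) < \cdots < w(2n)$ forces $\{w(n+1), \dots, w(2n)\}$ to contain the $n$ largest available values in a way that makes $w(1), \dots, w(n)$ small — more carefully, I would observe that $w(n+1) < \cdots < w(2n)$ together with $w$ being a permutation of $[2n]$ implies $w(i) \le n + i$ for... actually the cleanest route is: the condition $w(n+1) < \cdots < w(2n)$ says $w$ restricted to positions $> n$ is increasing, hence $\invcode(w)_i = 0$ for $i > n$, and one checks $\max_i \invcode(w)_i \le 2n - 1$; but what I really need is that $\delta$ is a legitimate snowy weak composition with $\supp(\delta) \subseteq [n]$ and $\max(\delta) \le m_1 + m_2 = 2n$, and that $\delta_i \ge 0$. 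Non-negativity of $n + 1 - w(i)$ for $i \le n$ would require $w(i) \le n+1$, which is false in general, so I must be more careful here — see the obstacle paragraph below.

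Granting that $\alpha, \gamma, \delta$ are valid inputs, the next step is purely a matter of identifying standardizations. I would invoke the Corollary to Theorem~\ref{T: topLas as reverse Schubert} (the one stating $\std_{n,n}(\alpha) = w$ when $\alpha = (n+1-w(n), \dots, n+1-w(1))$), applied three times: $\std_{n,n}(\alpha) = u$, $\std_{n,n}(\gamma) = v$, and $\std_{2n, n}(\delta) = w$. The first two are immediate from that corollary. The third requires recognizing that $\std_{m_1+m_2, n}(\delta)$ with $m_1 + m_2 = 2n$ and $\delta = (2n+1 - w(n), \dots, 2n+1 - w(1))$ — wait, here is a subtlety: the Corollary to Theorem~\ref{T: topLas as reverse Schubert} uses $r_{n,n}$ and the complement constant $n+1$, whereas $\delta$ as defined in the statement uses $n+1 - w(i)$, not $2n+1 - w(i)$. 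So to apply the theorem with $m_1 + m_2 = 2n$ I need $\std_{2n, n}(\delta)$ where $\delta_i = (n+1) - w(n+1-i)$; I should check directly from the definition of $\std_{m,n}$ that $\std_{2n,n}(\delta) = w$ under the hypothesis $w(n+1) < \cdots < w(2n)$, using that this hypothesis guarantees $w(n+1) < w(n+2) < \cdots$ in the sense required by the definition of the standardization map (namely that $w$ is the unique permutation with $w(n+1) < w(n+2) < \cdots$ extending the prescribed first $n$ values). Once these three identifications are in place, Theorem~\ref{T: structure constants} applied with $m_1 = m_2 = n$ yields $d^\delta_{\alpha,\gamma} = c^w_{u,v}$, which is the claim.

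The main obstacle I anticipate is the bookkeeping around which additive constant ($n+1$ versus $2n+1$) appears in $\delta$, and correspondingly making sure $\delta$ has non-negative entries and that $\std_{2n,n}(\delta) = w$ actually holds. If $\delta_i = n+1 - w(n+1-i)$ as literally written, then for this to be a weak composition we need $w(i) \le n+1$ for all $i \le n$; the hypothesis $w(n+1) < \cdots < w(2n)$ does constrain $w$ but I do not see that it forces $w(i) \le n+1$ on the first block. I suspect either (a) the intended reading is that $w$ additionally satisfies $w(i) \le n+1$ automatically under some convention I am missing, or (b) the constant should be $2n+1$, making $\delta_i = 2n+1 - w(n+1-i) \ge 1$, with $\max(\delta) \le 2n$, matching $m_1 + m_2 = 2n$ perfectly — and then the Corollary to Theorem~\ref{T: topLas as reverse Schubert} applied in the $2n \times n$ box (with complement constant $2n+1$... but that corollary is stated for the $n \times n$ case). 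The resolution is to go back to the definition of $\std_{m,n}$: I would verify by direct computation that with $m = 2n$, $\delta_i = (m+1) - w(n+1-i) = 2n+1 - w(n+1-i)$, one has $r_{m+1,n}(\delta)_i = (m+1) - \delta_{n+1-i} = w(i)$, so $r_{m+1,n}(\delta) = (w(1), \dots, w(n))$, all entries $\le 2n$, hence $\std_{2n,n}(\delta)(i) = w(i)$ for $i \in [n]$, and the tail condition $w(n+1) < w(n+2) < \cdots$ (which holds since $w(n+1) < \cdots < w(2n)$ and $w \in S_{2n}$) pins down $\std_{2n,n}(\delta) = w$. Thus I believe the statement should read $\delta = (2n+1 - w(n), \dots, 2n+1 - w(1))$, and modulo that correction the proof is a three-line application of Theorem~\ref{T: structure constants}; I would note this discrepancy and prove the corrected version, since the $n+1$ version is not even well-defined as a weak composition in general.

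\begin{proof}
Set $m_1 = m_2 = n$ in Theorem~\ref{T: structure constants}, so that $m_1 + m_2 = 2n$. Since $u, v \in S_n$, the values $u(i), v(i)$ for $i \in [n]$ lie in $[n]$, hence $\max(\alpha) \le n - 1 < n = m_1$, $\max(\gamma) \le n = m_2$, and $\supp(\alpha), \supp(\gamma) \subseteq [n]$; thus $\alpha, \gamma$ are snowy weak compositions meeting the hypotheses of the theorem, and directly from the definition of $\std_{n,n}$ one computes $\std_{n,n}(\alpha) = u$ and $\std_{n,n}(\gamma) = v$ (this is the Corollary to Theorem~\ref{T: topLas as reverse Schubert}). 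For $\delta = (2n+1 - w(n), \dots, 2n+1 - w(1))$: since $w \in S_{2n}$, each $w(i) \in [2n]$, so $1 \le 2n+1 - w(i) \le 2n$, giving $\max(\delta) \le 2n = m_1 + m_2$ and $\supp(\delta) \subseteq [n]$. Moreover $r_{2n+1, n}(\delta)_i = (2n+1) - \delta_{n+1-i} = w(i)$ for $i \in [n]$, so $r_{2n+1,n}(\delta) = (w(1), \dots, w(n))$ has all entries at most $2n$; hence the definition of $\std_{2n,n}$ gives $\std_{2n,n}(\delta)(i) = w(i)$ for $i \in [n]$, and since $w(n+1) < \cdots < w(2n)$ forces $w(n+1) < w(n+2) < \cdots$, we get $\std_{2n,n}(\delta) = w$. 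Theorem~\ref{T: structure constants} now applies verbatim and yields $d^{\delta}_{\alpha,\gamma} = c^w_{u,v}$.
\end{proof}
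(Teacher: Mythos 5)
Your proof is correct and follows the intended route: the corollary is precisely the specialization of Theorem~\ref{T: structure constants} to $m_1 = m_2 = n$, with $\std_{n,n}(\alpha)=u$, $\std_{n,n}(\gamma)=v$ and $\std_{2n,n}(\delta)=w$, which is all the paper's (unwritten) argument amounts to. Your diagnosis of the statement is also right: the constant defining $\delta$ must be $2n+1$ rather than $n+1$ (as written $\delta$ can have negative entries, and even when it does not, $\std_{2n,n}(\delta)\neq w$); this is confirmed by Example~\ref{E: structure constants permutation}, where $\delta=(8,6,5,7)$ and $w=[2,4,3,1]$ satisfy $\delta_i = 9 - w(5-i) = 2n+1 - w(n+1-i)$. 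One harmless slip in your final write-up: since $u\in S_n$ takes the value $1$ somewhere in $[n]$, $\max(\alpha)=n$, not at most $n-1$; the theorem only requires $\max(\alpha)\le m_1=n$, so the argument is unaffected.
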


\section{Bumpless pipedream formula}
\label{S: BPD}
In this section, we 
discuss another application of the relationship
between top Lascoux polynomials
and Schubert polynomials. 
Bumpless pipedreams (BPD) give a formula
to compute the monomial expansion 
of Schubert polynomials.
After ``reversing the BPDs'', we get a formula
for top Lascoux polynomials. 

\begin{defn}
Let $\alpha$ be an snowy weak composition. 
Find smallest $n$ such that $\supp(\alpha) \subseteq [n]$
and let $m = \max(\alpha)$.
A \definition{left-to-top BPD} of $\alpha$ 
is a grid with $n$ rows and $m$ columns 
built by tiles $\btile, \rtile, \jtile,
\ptile, \htile$ and $\vtile$.
For each $i \in [n]$ with $\alpha_i > 0$,
there is a pipe entering from the left
in row $i$ and 
goes to the top of column $\alpha_i$.
Moreover, no two pipes can cross more than once. 
Let $\LTBPD(\alpha)$ be the set of all 
left-to-top BPDs of $\alpha$.
\end{defn}

\begin{exa}
\label{E: BPD topL}
Consider $\alpha = (0,3,0,2)$.
Then $n = 4$ and $m = 3$.
The set $\LTBPD(\alpha)$ 
has five elements: 
\[
\begin{tikzpicture}[x=1.5em,y=1.5em,thick,color = blue]
\draw[step=1,gray,thin] (0,0) grid (3,4);
\draw[color=black, thick, sharp corners] (0,0) rectangle (3,4);
\draw(0, 2.5)--(2.5,2.5)--(2.5,4);
\draw(0, 0.5)--(1.5,0.5)--(1.5,4);
\end{tikzpicture}
\quad\quad\quad\quad
\begin{tikzpicture}[x=1.5em,y=1.5em,thick,color = blue]
\draw[step=1,gray,thin] (0,0) grid (3,4);
\draw[color=black, thick, sharp corners] (0,0) rectangle (3,4);
\draw(0, 2.5)--(2.5,2.5)--(2.5,4);
\draw(0, 0.5)--(0.5,0.5)--(0.5,1.5)--(1.5,1.5)--(1.5,4);
\end{tikzpicture}
\quad\quad\quad\quad
\begin{tikzpicture}[x=1.5em,y=1.5em,thick,color = blue]
\draw[step=1,gray,thin] (0,0) grid (3,4);
\draw[color=black, thick, sharp corners] (0,0) rectangle (3,4);
\draw(0, 2.5)--(2.5,2.5)--(2.5,4);
\draw(0, 0.5)--(0.5,0.5)--(0.5,3.5)--(1.5,3.5)--(1.5,4);
\end{tikzpicture}
\quad\quad\quad\quad
\begin{tikzpicture}[x=1.5em,y=1.5em,thick,color = blue]
\draw[step=1,gray,thin] (0,0) grid (3,4);
\draw[color=black, thick, sharp corners] (0,0) rectangle (3,4);
\draw(0, 2.5)--(0.5,2.5)--(0.5,3.5)--(2.5,3.5)--(2.5,4);
\draw(0, 0.5)--(1.5,0.5)--(1.5,4);
\end{tikzpicture}
\quad\quad\quad\quad
\begin{tikzpicture}[x=1.5em,y=1.5em,thick,color = blue]
\draw[step=1,gray,thin] (0,0) grid (3,4);
\draw[color=black, thick, sharp corners] (0,0) rectangle (3,4);
\draw(0, 2.5)--(0.5,2.5)--(0.5,3.5)--(2.5,3.5)--(2.5,4);
\draw(0, 0.5)--(0.5,0.5)--(0.5,1.5)--(1.5,1.5)--(1.5,4);
\end{tikzpicture}
\]
\end{exa}

\begin{rem}

Readers might notice that left-to-top BPDs
look like the top left part of a BPD 
after rotation. 
Keep $\alpha$, $m$ and $n$ from Example~\ref{E: BPD topL}. 
Consider the classical BPDs of the permutation $\std_{m,n}(\alpha) = [2,4,1,5,3]$.
There are five of them:
\[
\begin{tikzpicture}[x=1.5em,y=1.5em,thick,color = blue]
\draw[step=1,gray,thin] (0,0) grid (5,5);
\draw[color=black, thick, sharp corners] (0,0) rectangle (5,5);
\draw[color=red, ultra thick, sharp corners] (0,1) rectangle (3,5);
\draw(1.5, 0)--(1.5,4.5)--(5,4.5);
\draw(3.5, 0)--(3.5,3.5)--(5,3.5);
\draw(0.5, 0)--(0.5,2.5)--(5,2.5);
\draw(4.5, 0)--(4.5,1.5)--(5,1.5);
\draw(2.5, 0)--(2.5,0.5)--(5,0.5);
\end{tikzpicture}
\quad
\begin{tikzpicture}[x=1.5em,y=1.5em,thick,color = blue]
\draw[step=1,gray,thin] (0,0) grid (5,5);
\draw[color=black, thick, sharp corners] (0,0) rectangle (5,5);
\draw[color=red, ultra thick, sharp corners] (0,1) rectangle (3,5);
\draw(1.5, 0)--(1.5,3.5)--(2.5,3.5)--(2.5,4.5)--(5,4.5);
\draw(3.5, 0)--(3.5,3.5)--(5,3.5);
\draw(0.5, 0)--(0.5,2.5)--(5,2.5);
\draw(4.5, 0)--(4.5,1.5)--(5,1.5);
\draw(2.5, 0)--(2.5,0.5)--(5,0.5);
\end{tikzpicture}
\quad
\begin{tikzpicture}[x=1.5em,y=1.5em,thick,color = blue]
\draw[step=1,gray,thin] (0,0) grid (5,5);
\draw[color=black, thick, sharp corners] (0,0) rectangle (5,5);
\draw[color=red, ultra thick, sharp corners] (0,1) rectangle (3,5);
\draw(1.5, 0)--(1.5,1.5)--(2.5,1.5)--(2.5,4.5)--(5,4.5);
\draw(3.5, 0)--(3.5,3.5)--(5,3.5);
\draw(0.5, 0)--(0.5,2.5)--(5,2.5);
\draw(4.5, 0)--(4.5,1.5)--(5,1.5);
\draw(2.5, 0)--(2.5,0.5)--(5,0.5);
\end{tikzpicture}
\quad
\begin{tikzpicture}[x=1.5em,y=1.5em,thick,color = blue]
\draw[step=1,gray,thin] (0,0) grid (5,5);
\draw[color=black, thick, sharp corners] (0,0) rectangle (5,5);
\draw[color=red, ultra thick, sharp corners] (0,1) rectangle (3,5);
\draw(1.5, 0)--(1.5,4.5)--(5,4.5);
\draw(3.5, 0)--(3.5,3.5)--(5,3.5);
\draw(0.5, 0)--(0.5,1.5)--(2.5,1.5)--(2.5,2.5)--(5,2.5);
\draw(4.5, 0)--(4.5,1.5)--(5,1.5);
\draw(2.5, 0)--(2.5,0.5)--(5,0.5);
\end{tikzpicture}
\quad
\begin{tikzpicture}[x=1.5em,y=1.5em,thick,color = blue]
\draw[step=1,gray,thin] (0,0) grid (5,5);
\draw[color=black, thick, sharp corners] (0,0) rectangle (5,5);
\draw[color=red, ultra thick, sharp corners] (0,1) rectangle (3,5);
\draw(1.5, 0)--(1.5,3.5)--(2.5,3.5)--(2.5,4.5)--(5,4.5);
\draw(3.5, 0)--(3.5,3.5)--(5,3.5);
\draw(0.5, 0)--(0.5,1.5)--(2.5,1.5)--(2.5,2.5)--(5,2.5);
\draw(4.5, 0)--(4.5,1.5)--(5,1.5);
\draw(2.5, 0)--(2.5,0.5)--(5,0.5);
\end{tikzpicture}
\]

The top-to-left BPDs in Example~\ref{E: BPD topL}
are obtained by rotating the red part of these BPDs. 
\end{rem}

This pattern holds in general.

\begin{prop}
\label{P: rotate BPD}
Let $\alpha$ be a snowy weak composition. 
Find smallest $n$ such that $\supp(\alpha) \in [n]$
and let $m = \max(\alpha)$.
Let $w = \std_{m,n}(\alpha)$.
Then $\LTBPD(\alpha)$
is formed by rotating the first $n$ rows, $m$ columns
of BPDs in $\BPD(w)$.
\end{prop}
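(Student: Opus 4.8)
The plan is to prove the proposition by showing that the rotation is a bijection. For $D\in\BPD(w)$ with $w=\std_{m,n}(\alpha)$, let $\rho(D)$ be the grid with $n$ rows and $m$ columns obtained from the tiles of $D$ in its north-west $n\times m$ box by rotating $180^{\circ}$, so that the tile at $(r,c)$ moves to position $(n+1-r,m+1-c)$; under this rotation the tile types $\rtile$ and $\jtile$ are exchanged and $\btile,\vtile,\htile,\ptile$ are fixed, so $\rho(D)$ is built from the six admissible tiles. I will show that $\rho\colon\BPD(w)\to\LTBPD(\alpha)$ is a well-defined bijection; the proposition is exactly this statement. The key structural input is to pin down everything in a BPD of $w$ outside the north-west box. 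By Lemma~\ref{L: compare diagram}, $RD(w)$ lies in the first $n$ rows and $m$ columns, so the Rothe BPD of $w$ --- the unique element of $\BPD(w)$ whose set of $\btile$ tiles is $RD(w)$, the minimum of the droop order by~\cite{LLS} --- has no $\btile$ tile in any of the three ``outer'' regions northeast (rows $\le n$, columns $>m$), southwest (rows $>n$, columns $\le m$), southeast (rows $>n$, columns $>m$). By~\cite{LLS} every $D\in\BPD(w)$ is obtained from the Rothe BPD by a sequence of droop moves, and each droop places a pipe-elbow on a cell that was previously a $\btile$ while moving a corner of a pipe strictly to the south-east. Inductively, the outer regions start $\btile$-free and a droop confined to the north-west box does not touch them, so no droop can ever reach an outer region; hence all BPDs of $w$ coincide outside the north-west $n\times m$ box, and I write $F$ for this common ``frozen'' filling.

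Next I check $\rho$ lands in $\LTBPD(\alpha)$ and is injective. A reduced BPD has pipe ends on the outer boundary of its grid only at the bottoms of columns and the right ends of rows, so no pipe of $D$ meets the top edge of row $1$ or the left edge of column $1$; after rotation, $\rho(D)$ has no pipe meeting its bottom or right edge. Reading off the pipe behaviour along the north-west box from $F$ --- equivalently from the one-line notation of $w=\std_{m,n}(\alpha)$, which is increasing on positions $>n$ and places the large values $m+1,\dots$ at prescribed positions of $[n]$ --- one finds that the right edge of the box is crossed exactly in the rows $n+1-p$ for $p\in\supp(\alpha)$, each once, and its bottom edge exactly in the columns $m+1-\alpha_p$ for $p\in\supp(\alpha)$, each once, with the pipe crossing row $n+1-p$ on the right being the one crossing column $m+1-\alpha_p$ on the bottom. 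Rotating by $180^{\circ}$, $\rho(D)$ has, for each $p\in\supp(\alpha)$, a pipe entering from the left in row $p$ and leaving the top in column $\alpha_p$, and it inherits from $D$ the property that no two pipes cross more than once; hence $\rho(D)\in\LTBPD(\alpha)$. Since $D$ is recovered from $\rho(D)$ by undoing the rotation and re-attaching $F$, the map $\rho$ is injective.

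For surjectivity, given $E\in\LTBPD(\alpha)$, un-rotate it, place it in the north-west $n\times m$ box, and attach $F$, obtaining a grid $D_E$ on $[N]\times[N]$ with $N=m+n-|\supp(\alpha)|$. The tiles match along the box boundary because the boundary data of $E$ after un-rotation is exactly that of $F$ by the previous paragraph, and following pipes shows $D_E$ has a pipe entering the bottom of column $i$ and leaving the right of row $w(i)$ for every $i$, so $D_E$ realises $w$. To see $D_E$ is reduced, note that in any grid of BPD tiles realising $w$ the number of $\ptile$ tiles is at least the number of inversions $\ell(w)$, with equality if and only if the grid is reduced; the number of $\ptile$ tiles in a left-to-top BPD of $\alpha$ equals the number of inversions of the subsequence $(\alpha_p)_{p\in\supp(\alpha)}$ and is therefore a constant, so it agrees with the $\ptile$-count of the north-west box of the (reduced) Rothe BPD, and adding the $\ptile$-count of $F$ gives $\ell(w)$ for $D_E$; hence $D_E$ is reduced, $D_E\in\BPD(w)$, and $\rho(D_E)=E$. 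The step I expect to be the real obstacle is the determination of the frozen filling $F$: one must be sure that droop moves (as in~\cite{LLS}) generate $\BPD(w)$ from the Rothe BPD and genuinely require a blank target cell, so that the confinement induction is valid; a robust alternative, independent of the droop machinery, is to prove $F$ directly by observing that outside the north-west $n\times m$ box the rank conditions imposed by the permutation matrix of $\std_{m,n}(\alpha)$ leave no freedom, forcing any reduced BPD of $w$ there.
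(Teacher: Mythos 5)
Your proposal is correct, and it is in essence the rotation argument the paper intends, but where the paper disposes of everything with ``Immediate from Lemma~\ref{L: compare diagram}'', you identify and actually prove the three facts that this ``immediate'' hides: (i) that every $D\in\BPD(w)$ agrees with the Rothe BPD outside the north-west $n\times m$ box, which you get from Lemma~\ref{L: compare diagram} (placing $RD(w)$ inside the box) together with the fact from \cite{LLS} that droop moves generate $\BPD(w)$ from the Rothe BPD and only alter tiles in the rectangle between the moved elbow and the target blank, so by induction no blank and no modification ever leaves the box; (ii) the frozen boundary data, namely that the box's right edge is crossed exactly at rows $n+1-p$ and its bottom edge exactly at columns $m+1-\alpha_p$ for $p\in\supp(\alpha)$, with the correct pairing (this is the crossing pattern under the convention in which the pipe exiting row $j$ enters at the bottom of column $w(j)$, which is the convention under which the BPD theorem and this proposition hold and which matches the paper's pictures); and (iii) reducedness of the glued object in the surjectivity step, via the clean count that cross tiles in any tiling realizing $w$ number at least $\ell(w)$ with equality iff reduced, and that the cross count of any element of $\LTBPD(\alpha)$ is the constant given by inversions of $(\alpha_p)_{p\in\supp(\alpha)}$. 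Your flagged worry about the droop machinery is the right place to be careful, but the dependence is legitimate: droop-generation from the Rothe BPD is proved in \cite{LLS}, and your confinement induction works because the affected rectangle lies weakly north-west of the target blank, hence inside the box. The net effect is that your write-up buys a complete, checkable bijection $\rho\colon\BPD(w)\to\LTBPD(\alpha)$, whereas the paper's one-line proof leaves the frozen-region and reducedness points entirely to the reader.
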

\begin{proof}
Immediate from Lemma~\ref{L: compare diagram}.
\end{proof}

Now we are ready to introduce a combinatorial formula
for the top Lascoux polynomials. 
We just need to specify the ``weight'' of a left-to-top BPD.

\begin{defn}
Let $D$ be a left-to-top BPD for some snowy weak composition.
The \definition{non-blank weight} of $D$,
denoted as $\wt_{\boxdot}(D)$, 
is a weak composition where the $i^\textsuperscript{th}$
entry counts the number of \textbf{non-blank tiles} in row $i$ of $D$.
\end{defn}

\begin{thm}
\label{T: BPD topL}
Let $\alpha$ be a snowy weak composition. 
Then
$$
\topLas_\alpha = \sum_{D \in \LTBPD(\alpha)} \wt_{\boxdot}(D).
$$
\end{thm}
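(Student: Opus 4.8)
The plan is to derive Theorem~\ref{T: BPD topL} from the BPD formula for Schubert polynomials (Theorem of~\cite{LLS}) by pushing everything through the reverse-complement operator $r_{m,n}$, exactly as the section's introduction foreshadows. Fix a snowy weak composition $\alpha$, take the smallest $n$ with $\supp(\alpha) \subseteq [n]$ and $m = \max(\alpha)$, and let $w = \std_{m,n}(\alpha)$. By Theorem~\ref{T: topLas as reverse Schubert} we have $\fS_w = r_{m,n}(\topLas_\alpha)$, and since $r_{m,n}$ is an involution on polynomials in $x_1, \dots, x_n$ whose variables have degree at most $m$ (note $\topLas_\alpha$ satisfies this, as $x^{\rajcode(\alpha)}$ is its leading monomial and every $\rajcode(\alpha)_i \le m$ — here I should double-check that bound holds, since $\rajcode$ adds coinversions; in fact $\rajcode(\alpha)_i \le m$ need not hold, so the cleaner route is to apply $r_{m,n}$ directly and observe all exponents appearing are $\le m$ because each row of an LTBPD has $m$ tiles), we get $\topLas_\alpha = r_{m,n}(\fS_w)$.

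Next I would expand $\fS_w$ via the BPD formula: $\fS_w = \sum_{D \in \BPD(w)} x^{\wt_\square(D)}$. Applying $r_{m,n}$ term by term gives $\topLas_\alpha = \sum_{D \in \BPD(w)} x^{r_{m,n}(\wt_\square(D))}$. The key combinatorial input is Proposition~\ref{P: rotate BPD}: the map sending $D \in \BPD(w)$ to the rotation by $180^\circ$ of its top-left $n \times m$ subgrid is a bijection $\BPD(w) \to \LTBPD(\alpha)$. Call this bijection $\rho$ and write $D' = \rho(D)$. The heart of the proof is then the bookkeeping identity $r_{m,n}(\wt_\square(D)) = \wt_{\boxdot}(D')$: rotating the top-left $n\times m$ box by $180^\circ$ sends row $i$ to row $n+1-i$ and reverses columns; complementing the set of blank tiles within each length-$m$ row turns ``number of blanks in row $i$ of $D$'' into ``$m$ minus that'' in the image, which is precisely the number of non-blank tiles in row $n+1-i$ of $D'$. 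Since $r_{m,n}(\beta)_i = m - \beta_{n+1-i}$ for a weak composition $\beta$ supported on $[n]$ with entries $\le m$, and $\wt_\square(D)_{n+1-i} + \wt_{\boxdot}(D')_i = m$ for every $i \in [n]$, these match up exactly.

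There is one subtlety to handle carefully: the rotation in Proposition~\ref{P: rotate BPD} only uses the top-left $n \times m$ portion of $D$, so I must confirm that $\wt_\square(D)$ — which a priori counts blanks across all $n$ columns and rows of the $n\times n$ grid — is actually determined by that $n \times m$ corner. This is true because, by Lemma~\ref{L: compare diagram}, the Rothe diagram $RD(w)$ lives in the first $n$ rows and $m$ columns, and for a reduced BPD the blank tiles of $D$ form exactly the cells of a diagram contained in $RD(w)$ (the Rothe diagram is the ``maximal'' blank configuration, realized by the Rothe BPD); hence every blank tile of $D$ already lies in the top-left $n \times m$ box, so $\wt_\square(D)$ is supported on $[n]$ with each entry at most $m$, and equals the blank count of the $n\times m$ corner. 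I expect this containment fact — that blanks of any $D\in\BPD(w)$ sit inside $RD(w)$, equivalently inside the first $n$ rows and $m$ columns — to be the main obstacle, in the sense that it is the one step requiring genuine (though standard) structural knowledge of BPDs rather than formal manipulation; everything else is the routine rotation-and-complement chase sketched above. Assembling the pieces:
\[
\topLas_\alpha = \sum_{D \in \BPD(w)} x^{r_{m,n}(\wt_\square(D))} = \sum_{D' \in \LTBPD(\alpha)} x^{\wt_{\boxdot}(D')},
\]
which is the claimed formula (reading the right-hand side of the theorem statement as $\sum_{D \in \LTBPD(\alpha)} x^{\wt_{\boxdot}(D)}$).
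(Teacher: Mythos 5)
Your overall route is the paper's: combine Theorem~\ref{T: topLas as reverse Schubert} with the LLS bumpless pipedream formula and Proposition~\ref{P: rotate BPD}, and push weights through $r_{m,n}$. The rotation-and-complement bookkeeping is right, and you correctly isolate the one point that needs real input: every blank of every $D\in\BPD(w)$, $w=\std_{m,n}(\alpha)$, must lie in the first $n$ rows and first $m$ columns, since otherwise $\wt_\square(D)$ is not computed by the $n\times m$ corner and the identity $r_{m,n}(\wt_\square(D))_i=\wt_{\boxdot}(D')_i$ breaks down.

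However, your justification of that containment is false: the blanks of a reduced BPD need not form a subset of $RD(w)$, and the Rothe diagram is not a ``maximal'' blank configuration. The paper's own Example~\ref{E: BPD} refutes this: for $w=[2,1,4,3]$ one has $RD(w)=\{(1,1),(3,3)\}$, yet the second BPD there has blanks $\{(1,1),(2,1)\}$ and the third has blanks $\{(1,1),(1,2)\}$, which leave $RD(w)$. So this step needs a different argument; a correct one is available. For rows: $w=\std_{m,n}(\alpha)$ satisfies $w(n+1)<w(n+2)<\cdots$, so $\fS_w\in\Q[x_1,\dots,x_n]$, and since the BPD formula is cancellation-free, no $D\in\BPD(w)$ has a blank in a row $>n$. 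For columns: reflecting a BPD of $w$ across the main diagonal gives a BPD of $w^{-1}$, exchanging column-counts of blanks with row-counts; by the definition of $\std_{m,n}$ the values of $w$ exceeding $m$ occur at increasing positions, so $w^{-1}(m+1)<w^{-1}(m+2)<\cdots$, hence $\fS_{w^{-1}}\in\Q[x_1,\dots,x_m]$ and, again by cancellation-freeness, no blank sits in a column $>m$. (Also, your side worry is unfounded: $\rajcode(\alpha)_i\le m$ does hold for snowy $\alpha$, because the entries exceeding $\alpha_i$ to its right are distinct elements of $\{\alpha_i+1,\dots,m\}$; in any case the involution step only needs Theorem~\ref{T: topLas as reverse Schubert}, which already presupposes that $r_{m,n}$ is defined on $\topLas_\alpha$.) With the containment argued correctly, your proof goes through and coincides with the paper's intended argument.
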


For instance, Example~\ref{E: BPD topL} yields
$$
\topLas_{(0,3,0,2)} = x_1^2x_2^3x_3x_4^2 + x_1^2 x_2^3x_3^2x_4
+ x_1^3 x_2^3 x_3x_4 + x_1^3x_2^2x_3x_4^2 + x_1^3 x_2^2 x_3^2 x_4.
$$

\begin{proof}
Follows from Theorem~\ref{T: topLas as reverse Schubert}
and Proposition~\ref{P: rotate BPD}.
\end{proof}

\section{Key expansion of top Lascoux polynomials}
\label{S: key expansion}
In this section, we expand 
top Lascoux polynomials positively into key polynomials.
Our main tool is the Schubert-to-key expansion:
\begin{thm}[\cite{RS}]
For $w \in S_n$,
the Schubert polynomial $\fS_w$
can be expanded as $\sum_{\alpha} c^w_\alpha \kappa_\alpha$,
where the coefficients $c^w_\alpha$ are non-negative integers
counting certain tableaux. 
\end{thm}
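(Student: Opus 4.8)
This final statement is the Reiner--Shimozono expansion $\fS_w=\sum_\alpha c^w_\alpha\kappa_\alpha$ with each $c^w_\alpha\in\Z_{\geqslant 0}$ counting tableaux~\cite{RS}; note that it is logically independent of the $r_{m,n}$ machinery developed above, so my plan is to prove it directly from a monomial-positive model for $\fS_w$ together with the tableau (right-key) description of the key polynomials $\kappa_\alpha$. First I would fix a positive monomial model for $\fS_w$. One may take the BPD formula recalled from~\cite{LLS} in the excerpt, but for this argument the classical compatible-sequence form is more convenient: $\fS_w=\sum_{a}\sum_{i} x_{i_1}\cdots x_{i_\ell}$, where $a=(a_1,\dots,a_\ell)$ runs over reduced words of $w$ and $i=(i_1,\dots,i_\ell)$ runs over sequences with $i_1\leqslant\cdots\leqslant i_\ell$, $i_j\leqslant a_j$, and $i_j<i_{j+1}$ whenever $a_j<a_{j+1}$. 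The objective is to regroup this sum so that the monomials collected in each group are precisely the monomials of a single $\kappa_\alpha$.

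The organizing tool is insertion. I would apply the Edelman--Greene (equivalently Kra\'skiewicz) correspondence to each pair $(a,i)$, obtaining a pair $(P,Q)$ in which $P$ is an increasing ``reduced-word tableau'' recording $a$ up to nilplactic equivalence and $Q$ is a semistandard recording tableau of the same shape $\lambda_P$ carrying the weight, so that $x^{i_1}\cdots x^{i_\ell}=x^{\wt(Q)}$. Fixing $P$ and letting $\mathcal{Q}_P$ denote the set of recording tableaux $Q$ that co-occur with $P$ as $(a,i)$ ranges over the model, this reshuffles the sum into $\fS_w=\sum_P\sum_{Q\in\mathcal{Q}_P}x^{\wt(Q)}$. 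Thus the whole expansion is reduced to identifying each inner sum over $\mathcal{Q}_P$.

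For the identification I would invoke the tableau description of key polynomials due to Lascoux and Sch\"utzenberger: if $K_+$ denotes the right key of a semistandard tableau, then $\kappa_\alpha=\sum_{Q:\,K_+(Q)\leqslant\,\key(\alpha)}x^{\wt(Q)}$, the sum over semistandard tableaux of shape the partition of $\alpha$; equivalently, $\kappa_\alpha$ is the character of a Demazure crystal inside the highest-weight crystal $B(\lambda)$. The plan is then to show that for each $P$ there is a weak composition $\alpha(P)$ with $\mathcal{Q}_P=\{Q\in B(\lambda_P):K_+(Q)\leqslant\key(\alpha(P))\}$. Granting this, taking characters yields $\fS_w=\sum_P\kappa_{\alpha(P)}$, whence $c^w_\alpha=\#\{P:\alpha(P)=\alpha\}$ is a nonnegative integer counting reduced-word tableaux --- exactly the promised tableau count.

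The hard part will be proving that each $\mathcal{Q}_P$ is precisely such a Demazure piece, and this is where the positivity really lives. I would establish it by endowing semistandard tableaux with the type $A$ crystal operators $e_i,f_i$ and using that the chosen insertion intertwines the crystal operators on compatible sequences with those on recording tableaux; the problem then transports to the word side, where the set of compatible sequences of a fixed $w$ is cut out by the inequalities $i_1\leqslant\cdots\leqslant i_\ell$, $i_j\leqslant a_j$, with strict increase at the ascents of $a$. The crux is to verify that these flag and ascent inequalities match exactly the Demazure truncation of the full crystal (equivalently, that $\mathcal{Q}_P$ is closed under the Demazure operators $f_i$ restricted to the appropriate strings), uniformly in $P$. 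Once this intertwining and matching are in hand, the remainder is a routine character computation, and the statement follows.
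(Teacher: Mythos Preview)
The paper does not prove this theorem; it is stated as a citation of Reiner--Shimozono~\cite{RS} and used as a black box in \S\ref{S: key expansion}. There is therefore no ``paper's own proof'' to compare against.

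That said, your outline is a faithful sketch of the Reiner--Shimozono argument (building on Lascoux--Sch\"utzenberger): the Billey--Jockusch--Stanley compatible-sequence model, Edelman--Greene insertion to split the sum over a fixed insertion tableau $P$, and then identification of each fiber $\mathcal{Q}_P$ with a Demazure piece via the right-key map. You correctly flag the genuine content as the step where $\mathcal{Q}_P$ is shown to be a single Demazure crystal; this is exactly what \cite{RS} establishes (their $\alpha(P)$ is the content of the right key of $Q_0(P)$, the tableau recording the leftmost reduced word in the nilplactic class of $P$). Your crystal-intertwining heuristic is one modern route to this, though the original proof proceeds combinatorially via the nilplactic structure and key lemmas rather than crystals. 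As written your proposal is a correct high-level plan, but it remains a plan: the ``crux'' you name is precisely the theorem, and you have not supplied the argument for it.
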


We will translate this expansion to top Lascoux polynomials.
The first step is to understand how the $r_{m,n}$ operator
affects a key polynomial.
\begin{prop}
\label{P: reverse key}
Let $\alpha$ be any weak composition.
Let $m, n$ be positive integers.  
Then $r_{m,n}$ is defined on $\alpha$ if and only if
$r_{m,n}$ is defined on $\kappa_\alpha$.
If this is the case, 
$r_{m,n}(\kappa_\alpha) = \kappa_{r_{m,n}(\alpha)}$.
\end{prop}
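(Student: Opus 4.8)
The plan is to mirror the proof of Theorem~\ref{T: topLas as reverse Schubert}, replacing the recursion for $\topLas$ by the recursion for $\kappa$, and using the appropriate line of Lemma~\ref{L: commute operators}. First I would handle the ``if and only if'' and the base case. The operator $r_{m,n}$ is defined on $\alpha$ precisely when $\supp(\alpha)\subseteq[n]$ and $\alpha_i\le m$ for all $i$; since $\kappa_\alpha$ is a polynomial in $x_1,\dots,x_n$ all of whose monomials $x^\beta$ satisfy $\beta_i\le\max_j\alpha_j$ (the key polynomial is a sum of monomials obtained from $x^\alpha$ by Demazure operators, which only move powers between variables and never raise the maximum exponent or introduce variables outside $[n]$ — indeed $\kappa_\alpha$ is multiplicity-free with $x^\alpha$ as its lowest term and $x^{\mathrm{sort}(\alpha)^{\mathrm{rev}}}$-type leading term, all with the same exponent multiset support-bounded by $\alpha$), the two conditions are equivalent. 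This bookkeeping is the ``routine'' part; I would state it crisply rather than belabor it.

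Next, the substantive step: induct on $\alpha$ with respect to the same order used for defining $\kappa_\alpha$. If $\alpha$ is a partition, then $r_{m,n}(\alpha)$ is weakly increasing, hence $\kappa_{r_{m,n}(\alpha)}=x^{r_{m,n}(\alpha)}=r_{m,n}(x^\alpha)=r_{m,n}(\kappa_\alpha)$, using that $\kappa_\beta=x^\beta$ when $\beta$ is a partition. Wait — one must be careful here, since the recursion in the paper defines $\kappa_\alpha=x^\alpha$ for $\alpha$ a partition (weakly decreasing) and applies $\pi_i$ when $\alpha_i<\alpha_{i+1}$; so the ``stable'' compositions under the recursion are exactly partitions, and a partition $\alpha$ is sent by $r_{m,n}$ to a weakly increasing composition, which is \emph{not} itself a base case. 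So I would instead run the induction the other way, or observe that $\kappa_\beta$ for $\beta$ weakly increasing can be reached from $\kappa_{\mathrm{sort}_\downarrow(\beta)}$ by a sequence of $\pi_i$'s; it is cleaner to induct on $\alpha$ from partitions downward: if $\alpha$ is a partition, check the identity directly (both sides are monomials, and one verifies $\kappa_{r_{m,n}(\alpha)}$ equals $x^{r_{m,n}(\alpha)}$ by the known explicit formula for key polynomials of weakly increasing compositions, or by a short separate induction); if $\alpha_i<\alpha_{i+1}$, write $\kappa_\alpha=\pi_i(\kappa_{s_i\alpha})$ with $s_i\alpha$ strictly smaller in the recursion, apply $r_{m,n}$, and use the second identity of Lemma~\ref{L: commute operators}, namely $r_{m,n}(\pi_i(f))=\pi_{n-i}(r_{m,n}(f))$, to get $r_{m,n}(\kappa_\alpha)=\pi_{n-i}(r_{m,n}(\kappa_{s_i\alpha}))=\pi_{n-i}(\kappa_{r_{m,n}(s_i\alpha)})$ by the inductive hypothesis. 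It remains to identify $r_{m,n}(s_i\alpha)=s_{n-i}\,r_{m,n}(\alpha)$ (immediate from the definition of $r_{m,n}$ on compositions, which reverses and complements) and to note that $(r_{m,n}(\alpha))_{n-i}<(r_{m,n}(\alpha))_{n-i+1}$ is equivalent to $\alpha_i<\alpha_{i+1}$, so that $\pi_{n-i}(\kappa_{s_{n-i}r_{m,n}(\alpha)})=\kappa_{r_{m,n}(\alpha)}$ by the defining recursion of key polynomials. Chaining these gives the claim.

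The main obstacle I anticipate is the base case: unlike the Schubert situation, the ``fixed points'' of the key recursion are weakly decreasing compositions, and $r_{m,n}$ does not preserve that class, so I cannot simply say ``both sides are monomials indexed by a partition.'' I would resolve this by proving, as a small lemma or inline, that for a weakly \emph{increasing} composition $\beta$ with $\supp(\beta)\subseteq[n]$ one has $\kappa_\beta=x^\beta$ would be false in general, so instead I will reorganize the induction so that the genuinely trivial case is ``$\alpha$ weakly increasing'' (where $r_{m,n}(\alpha)$ is weakly decreasing, hence a partition, hence $\kappa_{r_{m,n}(\alpha)}=x^{r_{m,n}(\alpha)}=r_{m,n}(x^\alpha)$, and one separately notes $\kappa_\alpha=x^\alpha$ is \emph{not} available) — cleaner still is to induct on the number of ``descents'' needed to sort $\alpha$ to weakly decreasing order and peel off $\pi_i$ operators from the \emph{bottom}, using the observation that $r_{m,n}$ conjugates $\pi_i$ to $\pi_{n-i}$ and conjugates ``weakly decreasing'' to ``weakly increasing.'' In short, the only real work is to set up the induction on the correct poset so that both the base case and the inductive step are transparent; the operator identity is supplied by Lemma~\ref{L: commute operators} and the compatibility $r_{m,n}\circ s_i = s_{n-i}\circ r_{m,n}$ on compositions is a one-line check. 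I would also record that once this proposition is in hand, combining it with Theorem~\ref{T: topLas as reverse Schubert} and the Schubert-to-key expansion immediately yields the desired positive key expansion of $\topLas_\alpha$.
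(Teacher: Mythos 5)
Your inductive step is exactly the paper's: peel off $\pi_i$ when $\alpha_i<\alpha_{i+1}$, apply $r_{m,n}(\pi_i(f))=\pi_{n-i}(r_{m,n}(f))$ from Lemma~\ref{L: commute operators}, and use $r_{m,n}(s_i\alpha)=s_{n-i}\,r_{m,n}(\alpha)$ together with $\alpha_i<\alpha_{i+1}\iff r_{m,n}(\alpha)_{n-i}<r_{m,n}(\alpha)_{n-i+1}$; the bookkeeping for the ``if and only if'' is also fine. The genuine problem is your treatment of the base case, which rests on a miscalculation: you assert that $r_{m,n}$ sends a partition to a weakly \emph{increasing} composition. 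But $r_{m,n}$ both reverses and complements: $r_{m,n}(\alpha)_i=m-\alpha_{n+1-i}$, so if $\alpha_1\geq\alpha_2\geq\cdots\geq\alpha_n$ then $m-\alpha_n\geq m-\alpha_{n-1}\geq\cdots\geq m-\alpha_1$, i.e.\ $r_{m,n}(\alpha)$ is again a partition (the same fact underlies the base case of Theorem~\ref{T: topLas as reverse Schubert}, where $r_{m+1,n}(\alpha)$ comes out weakly decreasing). So the ``obstacle'' that occupies the second half of your proposal does not exist: when $\alpha$ is a partition, both sides are monomials and $r_{m,n}(\kappa_\alpha)=r_{m,n}(x^\alpha)=x^{r_{m,n}(\alpha)}=\kappa_{r_{m,n}(\alpha)}$, which is precisely the paper's base case.

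Moreover, the workaround you substitute does not close. Taking ``$\alpha$ weakly increasing'' as the base case, you note yourself that $\kappa_\alpha=x^\alpha$ is \emph{not} available there, so you have no evaluation of $r_{m,n}(\kappa_\alpha)$ and the identity $r_{m,n}(\kappa_\alpha)=\kappa_{r_{m,n}(\alpha)}$ is simply not verified in that case; and the alternative of peeling $\pi_i$'s ``from the bottom'' relies on the same false premise that $r_{m,n}$ exchanges weakly decreasing with weakly increasing compositions (it preserves each class, since reversal and complementation each flip monotonicity). Delete the detour, keep your first-paragraph base case with the corrected justification that $r_{m,n}(\alpha)$ is a partition, and your argument coincides with the proof in the paper.
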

The proof is essentially the same as the proof
of Theorem~\ref{T: topLas as reverse Schubert}.

\begin{proof}
The first claim is immediate. 
We prove the equation by induction on $\alpha$.
For the base case, assume $\alpha$ is a partition.
So is $r_{m,n} (\alpha)$.
We have 
$r_{m,n}(\kappa_\alpha) = r_{m,n}(x^\alpha) 
= x^{r_{m,n}(\alpha)} = \kappa_{r_{m,n}(\alpha)}$.

Now assume $\alpha_i < \alpha_{i+1}$ for some $i$.
For our inductive hypothesis, 
assume $\kappa_{s_i \alpha} = r_{m,n}(\kappa_{r_{m,n}(s_i \alpha)})$.
By Lemma~\ref{L: commute operators},
$$
\kappa_\alpha = \pi_i(\kappa_{s_i \alpha})
= \pi_i (r_{m,n} (\kappa_{r_{m,n}(s_i \alpha)}))
= r_{m,n} (\pi_{n-i} (\kappa_{s_{n-i} (r_{m,n}(\alpha)})))
= r_{m,n} (\kappa_{r_{m,n}(\alpha)}).
$$

\end{proof}

\begin{cor}
Let $\alpha$ be a snowy weak composition.
Find $m, n$ such that $\alpha \subseteq [n]$
and $\max(\alpha) \leq m$.
Let $w$ be the permutation $\std_{m,n}(\alpha)$.
The top Lascoux polynomial $\topLas_\alpha$
can be expanded as $\sum_{\gamma} c^w_\gamma \kappa_{r_{m,n}(\gamma)}$ where the coefficient 
$c^w_\gamma$ is the coefficient
of $\kappa_\gamma$ in the expansion of $\fS_w$.   
\end{cor}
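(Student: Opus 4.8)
The plan is to combine the three main ingredients that have just been assembled: the identity $r_{m,n}(\topLas_\alpha) = \fS_w$ from Theorem~\ref{T: topLas as reverse Schubert}, the Reiner--Shimozono key expansion $\fS_w = \sum_\gamma c^w_\gamma \kappa_\gamma$, and the compatibility of $r_{m,n}$ with key polynomials from Proposition~\ref{P: reverse key}. Since $r_{m,n}$ is an involution on the relevant space of polynomials (those in $x_1,\dots,x_n$ with each variable appearing to power at most $m$), applying it to both sides of Theorem~\ref{T: topLas as reverse Schubert} immediately gives $\topLas_\alpha = r_{m,n}(\fS_w)$.

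First I would check that $r_{m,n}$ is indeed defined on $\fS_w$: by the leading-monomial description, every monomial of $\fS_w = r_{m,n}(\topLas_\alpha)$ lies in $x_1,\dots,x_n$ with each exponent at most $m$, because $r_{m,n}$ maps such monomials to such monomials. (Alternatively one notes $\invcode(\std_{m,n}(\alpha))$ has all entries at most $m$ by the base-case computation inside the proof of Theorem~\ref{T: topLas as reverse Schubert}, so $\fS_w$ uses only $x_1,\dots,x_n$ with bounded exponents.) Next I apply $r_{m,n}$ to the Reiner--Shimozono expansion term by term, using linearity, to get
\[
\topLas_\alpha = r_{m,n}(\fS_w) = \sum_\gamma c^w_\gamma\, r_{m,n}(\kappa_\gamma).
\]
Then Proposition~\ref{P: reverse key} converts each $r_{m,n}(\kappa_\gamma)$ into $\kappa_{r_{m,n}(\gamma)}$, yielding exactly the claimed formula $\topLas_\alpha = \sum_\gamma c^w_\gamma\, \kappa_{r_{m,n}(\gamma)}$.

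The only genuine point requiring care — and the ``main obstacle,'' though it is mild — is making sure $r_{m,n}$ is defined on every $\kappa_\gamma$ that appears, i.e. that each $\gamma$ with $c^w_\gamma \neq 0$ satisfies $\supp(\gamma)\subseteq[n]$ and $\gamma_i\le m$ for all $i$. This follows because $\kappa_\gamma$ is a sum of monomials $x^\beta$ with $\beta$ obtained by permuting a subset of the entries of $\gamma$ weakly downward (each monomial of $\kappa_\gamma$ has the same degree and is $\le \gamma$ coordinatewise after rearrangement), so $\kappa_\gamma$ contains the monomial $x^\gamma$ itself; hence $x^\gamma$ appears in $\fS_w$, forcing $\gamma$ into the bounded region on which $r_{m,n}$ is defined. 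One can cite Proposition~\ref{P: reverse key}'s own hypothesis-matching remark for this, or simply observe that $r_{m,n}$ being defined on $\fS_w$ and the positivity of the expansion together force it to be defined on each $\kappa_\gamma$ in the sum. With this checked, the corollary is an immediate consequence of Theorem~\ref{T: topLas as reverse Schubert}, the Reiner--Shimozono theorem, and Proposition~\ref{P: reverse key}, and no further computation is needed.
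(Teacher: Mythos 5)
Your proposal is correct and follows essentially the same route as the paper: apply Theorem~\ref{T: topLas as reverse Schubert} to write $\topLas_\alpha = r_{m,n}(\fS_w)$, push $r_{m,n}$ through the Reiner--Shimozono expansion by linearity, and convert each $r_{m,n}(\kappa_\gamma)$ to $\kappa_{r_{m,n}(\gamma)}$ via Proposition~\ref{P: reverse key}. The extra check that $r_{m,n}$ is defined on every $\kappa_\gamma$ appearing (via positivity and the fact that $x^\gamma$ occurs in $\kappa_\gamma$) is a sound filling-in of a detail the paper leaves implicit, not a different argument.
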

\begin{proof}
By Theorem~\ref{T: topLas as reverse Schubert},
we know $\topLas_{\alpha} = r_{m,n}(\fS_w)$.
By the Schubert-to-key expansion and Proposition~\ref{P: reverse key},
$$
\topLas_{\alpha} = r_{m,n}(\sum_{\gamma} c^w_\gamma \kappa_{\gamma})
= \sum_{\gamma} c^w_\gamma r_{m,n}(\kappa_{\gamma})
= \sum_{\gamma} c^w_\gamma \kappa_{r_{m,n}(\gamma)}.
$$
\end{proof}

\section{Top Lascoux polynomials as dual-characters of flagged Weyl modules}
\label{S: character}

In this section, we show $\topLas_\alpha$ 
agrees with $\chi_{\snow(D(\alpha))}$.
Our proof relies on the following relation between 
the $r_{m,n}$ on polynomials and the $r_{m,n}$ on diagrams. 
\begin{prop}
\label{P: chi rotate}
Let $D$ be a diagram in the first $n$ rows and $m$ columns. 
Assume one can obtain the empty diagram 
by applying orthodontic moves from $D$. 
Then 
\begin{align}
\label{EQ: chi reverse}
\chi_{r_{m,n}(D)} = r_{m,n}(\chi_D).
\end{align}
\end{prop}
\begin{proof}
We prove by induction on $m$.
The case when $m = 0$ is trivial.

Suppose $m > 0$.
If $D$ is the empty diagram,
then $r_{m,n}(D)$ is the $n \times m$ box. 
We have $\chi_{r_{m,n}(D)} = x_1^m \cdots x_n^m$,
which agrees with $r_{m,n}(\chi_D) = r_{m,n}(1)$.

Now suppose $D$ is not empty and 
let $D'$ be a diagram obtained from $D$ after one
orthodontic move.
It remains to check~(\ref{EQ: chi reverse})
while assuming $\chi_{r_{m,n}(D')} = r_{m,n}(\chi_{D'})$.
We consider the two types of orthodontic moves.
\begin{itemize}
\item Suppose $D'$ is obtained from $D$
by removing cells $(1,1), \cdots, (1,r)$
and shift all remaining cells leftward. 
Thus, $D'$ lives in the $n \times (m-1)$ 
box. 
The diagram $r_{m,n}(D)$ is obtained by 
adding cells $(1,m), \cdots, (n-r, m)$ 
to $r_{m-1, n}(D')$.
Let $D''$ be the diagram obtained by moving 
column $m$ of $r_{m,n}(D)$ to column $1$
and shift all other column rightward by $1$.
By Theorem~\ref{T: Permuting columns},
$\chi_{r_{m,n}(D)} = \chi_{D''}$.
Notice that $r_{m-1,n}(D')$ is obtained from
$D''$ via an orthodontic move,
so $\chi_{D''} = x_1\cdots x_{n-r}\chi_{r_{m-1,n}(D')}$.
We have
$$
r_{m,n}(\chi_D) = r_{m,n}(x_1 \cdots x_r \chi_{D'})
= x_1\cdots x_{n-r} r_{m-1,n}(\chi_{D'})
=x_1\cdots x_{n-r} \chi_{r_{m-1,n}(D')} = \chi_{r_{m,n}(D)}.
$$

\item Suppose $D'$ is obtained from $D$
by swapping row $r$ and row $r+1$.
Then $r_{m,n}(D')$ is obtained from $r_{m,n}(D)$
via an orthodontic move that swaps row $n-r$
and row $n-r+1$.
By Lemma~\ref{L: commute operators}, 
we have 
$$
r_{m,n}(\chi_D) = r_{m,n}(\pi_r(\chi_{D'}))
= \pi_{n-r}(r_{m,n}(\chi_{D'}))
= \chi_{r_{m,n}(D)}. \qedhere
$$
\end{itemize}
\end{proof}

\begin{rem}
Fix an $n \in \mathbb{Z}_{>0}$.
The symmetrization operator $\pi_{w_0}$
can be defined as 
$$
\pi_{w_0} := (\pi_1 \cdots \pi_{n-1})
(\pi_1 \cdots \pi_{n-2}) \cdots (\pi_1 \pi_2) (\pi_1).
$$
It turns a polynomial in $x_1, \cdots, x_n$
into a polynomial symmetric in these variables.
Magyar~\cite{Mag} showed that for any 
diagram $D$ living in the first $n$ rows and $m$ columns,
$$
\pi_{w_0}(\chi_{r_{m,n}(D)}) = r_{m,n}(\pi_{w_0}(\chi_D)).
$$
Our statement in Proposition~\ref{P: chi rotate}
can be viewed as the ``non-symmetric refinement''
of Magyar's result.
\end{rem}

\begin{cor}
\label{C: top Las char}
Let $\alpha$ be a snowy weak composition.
We have $\topLas_\alpha = \chi_{\snow(D(\alpha))}$.
\end{cor}
\begin{proof}
Take $n$ with $\supp(\alpha) \subseteq [n]$
and $m$ with $\max(\alpha) \leq m$.
Let $w = \std_{m,n}(\alpha)$. 
By Lemma~\ref{L: compare diagram},
$RD(w) = r_{m,n}(\snow(D(\alpha))$.
By Theorem~\ref{T: PA and OM} and Proposition~\ref{P: chi rotate},
$r_{m,n}(\chi_{RD(w)}) = \chi_{\snow(D(\alpha))}$.
On the other hand, 
by Theorem~\ref{T: Schuberts are chars},
$\chi_{RD(w)} = \fS_w$.
Thus, 
$$
\topLas_\alpha 
= r_{m,n}(\fS_w) = r_{m,n}(\chi_{RD(w)}) = \chi_{\snow(D(\alpha))}. \qedhere
$$
\end{proof}

\begin{rem}
\label{R: top Gro char}
In~\cite{HMSS}, the authors focused on Grothendieck polynomials
indexed by vexillary (i.e. $2143$-avoiding) permutations.
For $w$ vexillary, they constructed a diagram $D^{top}(w)$
and conjectured $\topGro_w$ is a scalar multiple of $\chi_{D^{top}(w)}$.
By Pechenik and Scrimshaw~\cite{PS},
in this case, 
$\fG_w$ is a Lascoux polynomial.
Then by~\cite{PY2},
$\topGro_w$ is a scalar multiple 
of $\topLas_\alpha$ for some snowy $\alpha$.
As a conclusion, Corollary~\ref{C: top Las char} reduces their conjecture
to a purely combinatorial 
problem: check the two diagrams 
$D^{top}(w)$ and $\snow(D(\alpha))$
differ by permutation of columns. 
\end{rem}

We conclude this section by deriving a few properties
related to the support of $\topLas_\alpha$,
showing that its support has similar properties 
as the Schubert polynomials. 
Adve, Robichaux, and Yong~\cite{ARY}
characterized the support of Schubert polynomials 
using certain fillings:

\begin{defn}\cite{ARY}
Let $D$ be a diagram and $\alpha$ be a weak composition.
Then $\PT(D,\alpha)$ is the set of fillings of $D$ satisfying all of the following:
\begin{itemize}
\item For each $k$, the number of cells in $D$ filled by $k$ is $\alpha_k$.
\item In each column, numbers are increasing from top to bottom. 
\item Any entry in row $i$ is at most $i$.
\end{itemize}
\end{defn}

\begin{exa}\cite[Example 1.4]{ARY}
\label{E: PT Rothe}
Consider $D = RD((3,1,5,2,4))$.
We enumerate the six elements in
$\bigcup_\alpha \PT(D, \alpha)$:
$$
\begin{ytableau}
\none[1] & 1 & 1 \cr
\none[2] \cr
\none[3] & \none& 2 & \none & 1
\end{ytableau}
\quad\quad\quad\quad
\begin{ytableau}
\none[1] & 1 & 1 \cr
\none[2] \cr
\none[3] & \none& 2 & \none & 2
\end{ytableau}
\quad\quad\quad\quad
\begin{ytableau}
\none[1] & 1 & 1 \cr
\none[2] \cr
\none[3] & \none& 2 & \none & 3\cr
\none
\end{ytableau}
$$

$$
\begin{ytableau}
\none[1] & 1 & 1 \cr
\none[2] \cr
\none[3] & \none& 3 & \none & 1
\end{ytableau}
\quad\quad\quad\quad
\begin{ytableau}
\none[1] & 1 & 1 \cr
\none[2] \cr
\none[3] & \none& 3 & \none & 2
\end{ytableau}
\quad\quad\quad\quad
\begin{ytableau}
\none[1] & 1 & 1 \cr
\none[2] \cr
\none[3] & \none& 3 & \none & 3
\end{ytableau}
$$
\end{exa}

\begin{thm}\cite[Theorem 1.3]{ARY}
For a permutation $w$,
the support of $\fS_w$ is the set of $\alpha$
such that $\PT(RD(w), \alpha) \neq \emptyset$.
\end{thm}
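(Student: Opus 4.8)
The statement is equivalent, via the bumpless pipedream formula $\fS_w=\sum_{D\in\BPD(w)}x^{\wt_\square(D)}$ of Lam, Lee and Shimozono~\cite{LLS}, to the purely combinatorial identity
\[
\{\wt_\square(D):D\in\BPD(w)\}=\{\alpha:\PT(RD(w),\alpha)\neq\emptyset\}.
\]
This reduction is clean precisely because the BPD expansion is cancellation-free: $\supp(\fS_w)$ is exactly the set of blank-weights occurring among BPDs of $w$, with no interference between terms. So the plan is to prove the displayed set identity by constructing, in each direction, a weight-preserving map between $\BPD(w)$ and $\bigcup_\alpha\PT(RD(w),\alpha)$; bijectivity is not needed.

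For $\subseteq$ I would exploit the droop structure on $\BPD(w)$. The ``Rothe BPD'', in which every pipe runs straight up and then turns right, has its blank tiles occupying precisely the cells of $RD(w)$, hence blank-weight $\wt(RD(w))$, and every other BPD of $w$ is reached from it by a sequence of droop moves. Tracking the blank tiles along such a sequence associates to each blank of a given $D\in\BPD(w)$ the cell of $RD(w)$ from which it descends; filling that cell $(r,c)$ with the row index occupied in $D$ by the descended blank yields a filling of $RD(w)$ with content $\wt_\square(D)$. The facts that a droop never moves a blank below its current row and never lets two blanks of one column collide or swap translate directly into the two axioms of $\PT$ (entry in row $i$ is at most $i$; entries strictly increase down each column). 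This is the routine half.

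For $\supseteq$ I would invert the construction: given $T\in\PT(RD(w),\alpha)$, read off, column by column, the destination rows it prescribes for the blanks of the Rothe BPD, and realize those relocations by an explicit sequence of droops. The per-column strictly-increasing axiom says exactly that within each column the destinations are distinct and order-preserving, which is the shape of data a legal droop sequence can deliver, and the bound ``entry in row $i$ is at most $i$'' keeps every destination inside the grid. The step I expect to be the main obstacle is verifying that the tile array produced this way is a genuine \emph{reduced} BPD of $w$ --- that no two pipes cross twice --- for an \emph{arbitrary} perfect tableau, not merely for the ones arising in the first half. I would attack this by an induction paralleling the standard proof of the BPD formula (for instance on the number of droop moves separating the array from the Rothe BPD, or on a peeling of the cells of $RD(w)$), carrying the no-double-crossing invariant along; the real content is a characterization of which blank-relocation patterns are droop-realizable, matched precisely against the two $\PT$ axioms.

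One alternative is worth noting. One can sidestep BPDs and combine the positive key-polynomial expansion $\fS_w=\sum_\beta\kappa_\beta$ of Reiner and Shimozono~\cite{RS} with the known description of $\supp(\kappa_\beta)$; the task then reduces to showing that the union of those key supports, read through the Rothe diagram, is exactly the perfect-tableau set (invoking, if needed, that $\fS_w$ has the SNP property~\cite{FMS} to control the convex geometry). This is close to the argument of Adve, Robichaux and Yong~\cite{ARY}, but it relies on facts about key polynomials and Newton polytopes not recorded in this excerpt, so I would present the BPD-based proof here.
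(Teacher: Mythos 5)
The paper itself does not prove this statement: it is quoted from Adve--Robichaux--Yong, whose argument runs through the Newton polytope --- the Fink--M\'esz\'aros--St.~Dizier theorem that $\fS_w$ has saturated Newton polytope equal to the Schubitope of $RD(w)$, combined with a Hall-type identification of the lattice points of that polytope with the contents of fillings in $\PT(RD(w),\cdot)$. Your plan is a genuinely different route: reduce, via the cancellation-free BPD expansion, to the set identity $\{\wt_\square(D): D\in\BPD(w)\}=\{\alpha:\PT(RD(w),\alpha)\neq\emptyset\}$. That reduction is correct, and the facts you quote about droops (Rothe BPD has blanks exactly on $RD(w)$; every BPD is reachable from it by droops; a droop moves a blank strictly up and to the left) are available in Lam--Lee--Shimozono. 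But both halves of the set identity are left with real gaps.

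For $\subseteq$, the column-strictness of your droop-tracking filling is asserted, not proved, and it is not a local consequence of a single droop: a droop moves the blank at the SE corner of its rectangle to the NW corner, and nothing visibly prevents that blank from leapfrogging, in row index, another blank that originated lower in the \emph{same} column of $RD(w)$ but now sits at an intermediate row in a different column (the two blanks need not stay in one column, and the tracking itself depends on the chosen droop sequence). ``Never lets two blanks of one column collide or swap'' is precisely what must be established, possibly after re-choosing the matching between blanks and cells of $RD(w)$. For $\supseteq$, you concede that realizing an arbitrary perfect tableau by a droop sequence while preserving reducedness is ``the main obstacle'' and offer only an intended induction; but that is the entire content of the theorem in this direction --- one needs a criterion for which blank-relocation patterns are droop-realizable and a proof that the two $\PT$ axioms imply it, neither of which is supplied. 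As it stands the proposal is a plausible program rather than a proof; the ARY/FMS route you set aside (SNP plus the Schubitope lattice-point description) is the argument that actually closes these gaps, at the cost of the external inputs you wished to avoid.
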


For instance, Example~\ref{E: PT Rothe} 
says that $\supp(\fS_{(3,1,5,2,4)})$
consists of $(3,1), (2,2), (2,1,1), (3,0,1)$
and $(2,0,2)$.
Fink, M{\'e}sz{\'a}ros, and St. Dizier~\cite{FMS}
extended this characterization to $\chi_D$.
\begin{thm}\cite[Theorem 7]{FMS}
For a diagram $D$,
$\chi_D$ has SNP and its support $\chi_D$ 
is the set of $\alpha$
such that $\PT(D, \alpha) \neq \emptyset$.
\end{thm}
Consequently, the support of a top Lascoux polynomial 
can be characterized in the same manner.
\begin{cor}
\label{C: support of topL}
For a snowy weak composition $\alpha$,
$\topLas_\alpha$ has saturated Newton polytope. 
View $\snow(D(\alpha))$ as a diagram with labels erased. 
The support of $\topLas_\alpha$ 
is the set of $\gamma$ such that \\
$\PT(\snow(D(\alpha)), \gamma) \neq \emptyset$.
\end{cor}

\begin{exa}
\label{E: PT snow}
Consider the snowy weak composition $(0,4,2)$.
We enumerate the six elements in
$\bigcup_\alpha \PT(\snow(D(\alpha)), \alpha)$:
$$
\begin{ytableau}
\none[1] & \none & 1 & \none & 1 \cr
\none[2] & 1 & 2  & 1 & 2\cr
\none[3] & 2 & 3
\end{ytableau}
\quad\quad\quad\quad
\begin{ytableau}
\none[1] & \none & 1 & \none & 1 \cr
\none[2] & 1 & 2  & 1 & 2\cr
\none[3] & 3 & 3
\end{ytableau}
\quad\quad\quad\quad
\begin{ytableau}
\none[1] & \none & 1 & \none & 1 \cr
\none[2] & 2 & 2  & 1 & 2\cr
\none[3] & 3 & 3
\end{ytableau}
$$

$$
\begin{ytableau}
\none[1] & \none & 1 & \none & 1 \cr
\none[2] & 1 & 2  & 2 & 2\cr
\none[3] & 2 & 3
\end{ytableau}
\quad\quad\quad\quad
\begin{ytableau}
\none[1] & \none & 1 & \none & 1 \cr
\none[2] & 1 & 2  & 2 & 2\cr
\none[3] & 3 & 3
\end{ytableau}
\quad\quad\quad\quad
\begin{ytableau}
\none[1] & \none & 1 & \none & 1 \cr
\none[2] & 2 & 2  & 2 & 2\cr
\none[3] & 3 & 3
\end{ytableau}
$$

By Proposition~\ref{C: support of topL},
the support of $\topLas_{(0,4,2)}$
consists of $(4,3,1), (4,2,2), (3,3,2), (3,4,1)$ and $(2,4,2)$.
\end{exa}

\section{Acknowledgments}
We are grateful to Elena Hafner, Jianping Pan, 
Brendon Rhoades, Travis Scrimshaw and Avery St. Dizier
for carefully reading an earlier
version of this paper 
and giving many useful comments.
We also thank Sara Billey,
Zachary Hamaker, Daoji Huang, Oliver Pechenik, 
Mark Shimozono, 
and Alexander Yong
for helpful conversations. 
\bibliographystyle{alpha}
\bibliography{main}{}
\end{document}